\definecolor{greentuc}{rgb}{0.0, 0.38, 0.30}
\numberwithin{equation}{section}
\pgfplotsset{compat=newest}
\setlist[enumerate,1]{label=(\roman*)}  
\DeclareMathOperator{\diag}{diag}		
\DeclareMathOperator*{\essinf}{ess\,inf}
\DeclareMathOperator*{\argmin}{arg\,min}
\DeclareMathOperator{\supp}{{supp}}
\DeclareMathOperator{\sign}{{sign}}
\newcommand{\smin}[1]{\mathop{\min\nolimits_{#1}}}
\newcounter{relctr} 
\everydisplay\expandafter{\the\everydisplay\setcounter{relctr}{0}} 
\theoremstyle{plain}
	\newtheorem{theorem}{Theorem}[section]		
	\newtheorem{lemma}[theorem]{Lemma}
	\newtheorem{proposition}[theorem]{Proposition}
\theoremstyle{definition}
	\newtheorem{definition}[theorem]{Definition}
\theoremstyle{remark}
	\newtheorem{remark}[theorem]{Remark}
	\newtheorem{example}[theorem]{Example}
\title{Soft Quantization using Entropic Regularization}
\author{
	Rajmadan Lakshmanan%
		\thanks{Technische Universität Chemnitz, Faculty of mathematics, Chemnitz, Germany}\,
		 \footnote{\orcidlink{0009-0006-3273-9063} \href{https://orcid.org/0009-0006-3273-9063}{https://orcid.org/0009-0006-3273-9063}. Contact: \protect\href{rajmadan.lakshmanan@math.tu-chemnitz.de}{rajmadan.lakshmanan@math.tu-chemnitz.de}}\and
	Alois Pichler\footnotemark[1]\ \ %
		\thanks{\orcidlink{0000-0001-8876-2429} \href{https://orcid.org/0000-0001-8876-2429}{https://orcid.org/0000-0001-8876-2429}.
		Contact: \protect\href{mailto:alois.pichler@math.tu-chemnitz.de}{alois.pichler@math.tu-chemnitz.de} \\
		DFG, German Research Foundation – Project-ID 416228727 – SFB~1410}
}
\begin{document}
\maketitle
	\begin{abstract}
		The quantization problem aims to find the best possible approximation of probability measures on $ℝᵈ$ using finite, discrete measures.
		The Wasserstein distance is a typical choice to measure the quality of the approximation.

		This contribution investigates the properties and robustness of the entropy-regularized quantization problem, which relaxes the standard quantization problem.
		The proposed approximation technique naturally adopts the softmin function, which is well known for its robustness in terms of theoretical and practicability standpoints.
		Moreover, we use the entropy-regularized Wasserstein distance to evaluate the quality of the soft quantization problem's approximation, and we implement a stochastic gradient approach to achieve the optimal solutions.
		The control parameter in our proposed method allows for the adjustment of the optimization problem's difficulty level, providing significant advantages when dealing with exceptionally challenging problems of interest.
		As well, this contribution empirically illustrates the performance of the method in various expositions.\medskip
		
		\noindent \textbf{Keywords:} 
		Quantization · approximation of measures · entropic regularization

		\noindent \textbf{Classification:}
		94A17, 81S20, 40A25 
	\end{abstract}

\section{Introduction}

Over the past few decades, extensive research has been conducted on optimal quantization techniques in order to tackle numerical problems that are related to various fields such as data science, applied disciplines, and economic models. 
These problems are typically centered around \emph{uncertainties} or \emph{probabilities} which demand robust and efficient solutions (cf.␠\citet{Graf1989}, \citet{luschgy2015greedy}, \citet{el2022new}).
In general, these problems are difficult to handle, as the random components in the problem allow uncountable many outcomes.
As a consequence to address this difficulty, the probability measures are replaced by simpler or finite measures, which facilitates numerical computations.
However, the probability measures should be ‘close’, so that the result of the computations with approximate (discrete) measures will resemble the original problem. 
In a nutshell, the goal is to find the best approximation of a diffuse
measure using a discrete measure, and it is called \emph{optimal quantization} problem.
For a comprehensive discussion of the optimal quantization problem from a mathematical standpoint, we refer to \citet{GrafLuschgy}.

On the other hand, \emph{entropy} is an inevitable concept to deal with uncertainties and probabilities. 
In mathematics, entropy is often used as a measure of information and uncertainty. It provides a quantitative measure of the randomness or disorder in a system or a random variable. 
Its applications span across information theory, statistical analysis, probability theory, and the study of complex dynamical systems (cf. \citet{Breuer2013,Breuer2013a}, \citet{PichlerSchlotterEntropy}).

In order to assess the closeness of the probability measures, distances are often considered, and one of the notable instances is the Wasserstein distance.
Ostensibly, the Wasserstein distance measures the minimum, average amount of transporting cost required to transfer one probability distribution into another. 
Unlike other formulations of distances and/ or divergence, which simply compares the probabilities of the distribution functions (e.g., the total variation distance and the Kullback–Leibler divergence), the Wasserstein distance incorporates the support of the underlying distributions.
This increases the understanding of the relationships between different probability measures in a geometrically trustworthy manner.

In our research work, we focus on entropy adjusted quantization methods.
More precisely, we consider an entropy regularized version of the Wasserstein problem to quantify the quality of the approximation, and we adapt the stochastic gradient approach to obtain the optimal quantizers.

Some key features of our methodology include the following:
\begin{enumerate}[]
	◦	This regularization approach stabilizes and simplifies the standard quantization problem by introducing penalty terms or constraints that discourage overly complex or overfit models, promoting better generalizations and robustness in the solutions.
	◦	The influence of entropy is controlled using a parameter $λ$, which also facilitates us to reach the genuine optimal quantizers.
	◦	Generally, parameter tuning comes with certain limitations. However, our method builds upon the framework of the well-established softmin function, which allows us to exercise parameter control without encountering any restrictions.
	◦	For larger regularization parameter $λ$, the optimal measure accumulates all its mass at the center of the measure.
\end{enumerate}

\paragraph{Related works and contributions.} 
As mentioned above, optimal quantization is a well-researched topic in the field of information theory and signal processing. There are several methods that have been developed for optimal quantization problem. Here are some remarkable methods of optimal quantization:
\begin{enumerate}[label=--]
	◦	Lloyd-Max Algorithm: the Lloyd-Max algorithm, also known as the Lloyd’s algorithm or the $k$‑means algorithm, is a popular iterative algorithm for computing optimal vector quantizers. It iteratively adjusts the centroids of the quantization levels to minimize the quantization error (cf. \citet{scheunders1996genetic}).
	◦	Tree-Structured Vector Quantization (TSVQ): TSVQ is a hierarchical quantization method that uses a tree structure to partition the input space into regions. It recursively applies vector quantization at each level of the tree until the desired number of quantization levels is achieved (cf. \citet{wei2000fast}).
	◦	Expectation-maximization (EM) algorithm: the EM algorithm is a general-purpose optimization algorithm that can be used for optimal quantization. It is an iterative algorithm that estimates the parameters of a statistical model to maximize the likelihood of the observed data (cf. \citet{heskes2001self}).
	◦	Stochastic Optimization Methods: Stochastic optimization methods, such as simulated annealing, genetic algorithms, and particle swarm optimization, can be used to find optimal quantization strategies by exploring the search space and iteratively improving the quantization performance (cf. \citet{stochasticPages}).
	◦	Greedy vector quantization (GVQ): the greedy algorithm tries to solve this problem iteratively, by adding one code word at every step until the desired number of code words is reached, and each time selecting the code word that minimizes the error. GVQ is known to provide suboptimal quantization compared to other non-greedy methods like the Lloyd-Max and Linde-Buzo-Gray algorithms. However, it has been shown to perform well when the data has a strong correlation structure. Notably, it utilizes Wasserstein distance to measure the error of approximation (cf.␠\citet{luschgy2015greedy}).
\end{enumerate}

These methods provide efficient and practical solutions for finding optimal quantization schemes with different trade-offs between complexity and performance. The choice of method depends on the problem of interset and the requirements of the application. However, most of these methods depend on strict constraints which makes the solutions overly complex or overfit models. 
Our method mitigates this issue by promoting better generalizations and robustness in the solutions.

In the optimal transport community, entropy regularized version of optimal transport problem (also known as entropy regularized Wasserstein problem) is initial proposed by \citet{cuturi2013sinkhorn}.
This entropy version of Wasserstein problem promotes fast computations using Sinkhorn’s algorithm. 
As an avenue for constructive research, this study has presented a multitude of results aimed at gaining a comprehensive understanding of the subtleties involved in enhancing the computational performance of entropy optimal transport (cf.\ \citet{ramdas2017wasserstein}, \citet{neumayer2021optimal}, \cite{NEURIPS2019_f55cadb9}, \citet{lakshmanan2023nonequispaced}).
These findings serve as a valuable foundation for further exploration in the field of optimal transport, providing insights into both the intricacies of the topic and potential avenues for improvement.

In contrast, we present a new, innovative approach that concentrates on the optimal quantization problem based on entropy, and its robust properties, which is a distinct contribution from standard entropy regularized optimal transport problems.
\medskip

One of the principal consequences of our research substantiates the behavior of convergence of quantizers at the center of the measure.
The relationship between the center of measure and entropy regularized quantization problem has not been exposed yet.
The following plain solution is obtained by intensifying the entropy term in the regularization of the quantization problem.
\begin{theorem}\label{thm:198}
	There exist a real valued $λ₀> 0$ such that the best approximation of entropy regularized optimal quantization problem is given by the Dirac-measure
	\[	P= δₐ \]
	for every $λ> λ₀$, where $a$ is the center of the measure $P$ with respect to the distance $d$.
\end{theorem}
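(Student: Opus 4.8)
The plan is to compare the value of the entropy-regularized quantization functional at an arbitrary admissible quantizer $Q=\sum_{j=1}^{n}q_{j}\delta_{y_{j}}$ with its value at $\delta_{a}$. On discrete measures this functional takes the softmin form highlighted in the introduction, $F_{\lambda}(Q)=\int\operatorname{softmin}_{\lambda}\!\big(c(x,y_{1}),\dots,c(x,y_{n})\big)\,P(\mathrm{d}x)$, with $\operatorname{softmin}_{\lambda}(v)=-\lambda\log\sum_{j}q_{j}e^{-v_{j}/\lambda}$ and $c(x,y)$ the cost (a power of $d(x,y)$). Writing $\varphi(y):=\int c(x,y)\,P(\mathrm{d}x)$, the center of $P$ is $a=\argmin_{y}\varphi(y)$, and a direct computation (the only coupling of $P$ with a single Dirac being the product one) gives $F_{\lambda}(\delta_{a})=\varphi(a)=\min_{y}\varphi(y)$. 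Thus it suffices to exhibit a $\lambda_{0}$ with $F_{\lambda}(Q)\ge\varphi(a)$ for every admissible $Q$ and every $\lambda>\lambda_{0}$, and with equality only for $Q=\delta_{a}$.

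First I would shrink the search space: replacing an atom $y_{j}$ by its metric projection onto the closed convex hull $K$ of $\supp P$ never increases any distance $c(x,y_{j})$, $x\in\supp P$, hence never increases $F_{\lambda}$, so we may assume $y_{1},\dots,y_{n}\in K$; consequently $v_{j}(x):=c(x,y_{j})$ and its $q$-average $\bar v(x):=\sum_{j}q_{j}v_{j}(x)$ obey $\max_{j}|v_{j}(x)-\bar v(x)|\le M_{0}:=\sup_{x\in\supp P,\,y\in K}c(x,y)<\infty$. The heart of the argument is a second-order bound on the softmin: since $\operatorname{softmin}_{\lambda}$ is equivariant under adding a constant to all coordinates, it is enough to control it on the $q$-centred vector $w:=v-\bar v$, and $h(t):=\operatorname{softmin}_{\lambda}(tw)$ satisfies $h(0)=h'(0)=0$ and $h''(t)=-\tfrac1\lambda\var_{p_{t}}(w)$ for the tilted weights $p_{t}(j)\propto q_{j}e^{-tw_{j}/\lambda}$; using $p_{t}(j)\le e^{2M_{0}/\lambda}q_{j}$ and Taylor's theorem, $\operatorname{softmin}_{\lambda}(v)\ge\bar v-\tfrac{e^{2M_{0}/\lambda}}{2\lambda}\var_{q}(v)$. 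Integrating in $x$ yields
\[
F_{\lambda}(Q)\ \ge\ \sum_{j}q_{j}\varphi(y_{j})\ -\ \frac{e^{2M_{0}/\lambda}}{2\lambda}\,V(Q),\qquad V(Q):=\int\var_{q}\!\big(c(x,\cdot)\big)\,P(\mathrm{d}x).
\]

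Because $\sum_{j}q_{j}\varphi(y_{j})\ge\varphi(a)$ always, the theorem follows once the deficit $V(Q)$ is absorbed into this surplus, i.e. once one proves $V(Q)\le C\big(\sum_{j}q_{j}\varphi(y_{j})-\varphi(a)\big)$ for a constant $C$ depending only on $P$ and $K$. Discarding the (nonnegative) squared mean in $\var_{q}$ bounds $V(Q)$ by $\sum_{j}q_{j}G(y_{j})$ with $G(y):=\int\big(c(x,y)-c(x,a)\big)^{2}\,P(\mathrm{d}x)$, so it suffices to show $G(y)\le C\big(\varphi(y)-\varphi(a)\big)$ on $K$: for $|y-a|\ge\delta$ this is clear since $\varphi-\varphi(a)$ is bounded below and $G$ bounded above on that part of the compact set $K$, while for $y$ near $a$ one has $G(y)=O(|y-a|^{2})$ together with the matching bound $\varphi(y)-\varphi(a)\gtrsim|y-a|^{2}$ expressing that $a$ is a nondegenerate minimizer of $\varphi$. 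Plugging this back, $F_{\lambda}(Q)\ge\varphi(a)+\big(1-\tfrac{Ce^{2M_{0}/\lambda}}{2\lambda}\big)\big(\sum_{j}q_{j}\varphi(y_{j})-\varphi(a)\big)$; any $\lambda_{0}$ with $Ce^{2M_{0}/\lambda_{0}}<2\lambda_{0}$ keeps the bracket positive for all $\lambda>\lambda_{0}$, whence $F_{\lambda}(Q)\ge\varphi(a)=F_{\lambda}(\delta_{a})$, and equality forces $\sum_{j}q_{j}\varphi(y_{j})=\varphi(a)$, hence $y_{j}=a$ whenever $q_{j}>0$ by uniqueness of the center — that is, $Q=\delta_{a}$.

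The step I expect to be the main obstacle is the comparison $G(y)\le C\big(\varphi(y)-\varphi(a)\big)$ near the center: there $\varphi(y)-\varphi(a)$ and the deficit $V(Q)$ are of the same quadratic order in the displacements of the atoms from $a$, so the whole scheme genuinely hinges on the extra factor $1/\lambda$, and the quadratic growth of $\varphi$ at $a$ must be pinned down — it is automatic for the squared distance on an inner-product space, can be recovered by passing to $\operatorname{aff}(\supp P)$ when $P$ is lower-dimensional, but is a genuine hypothesis when the potential $\varphi$ is flatter than quadratic at its minimizer. A minor nuisance is the uniform constant $M_{0}$ entering the softmin estimate, which requires $\supp P$ to be bounded (or an appropriate moment condition) rather than merely $\varphi$ finite.
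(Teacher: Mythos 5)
Your argument is essentially correct (modulo the hypotheses you flag yourself), but it takes a genuinely different route from the paper. The paper's proof is \emph{local}: it observes that the configuration $y_1=\dots=y_m=a$ is a stationary point of $f(y_1,\dots,y_m)=\E\bigl(\smin{\lambda;Ꝓ}_j d(\xi,y_j)^r\bigr)$, then computes the Hessian there as the sum of a positive-definite term $\E\,\sigma_\lambda\bullet\nabla^2 d^r$ of order $\mathcal O(1)$ and a negative-semidefinite term $\E\,\nabla\sigma_\lambda\bullet(\nabla d^r)^2=-\tfrac1\lambda\E\,\Sigma\bullet(\nabla d^r)^2$ of order $\mathcal O(1/\lambda)$, and concludes positive definiteness for $\lambda$ large. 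You instead prove a \emph{global} comparison: the two-sided softmin estimate $\bar v-\tfrac{e^{2M_0/\lambda}}{2\lambda}\var_q(v)\le\smin{\lambda}(v)\le\bar v$ linearizes the objective up to a variance deficit of order $1/\lambda$, and the key lemma $G(y)\le C(\varphi(y)-\varphi(a))$ absorbs that deficit into the surplus $\sum_j q_j\varphi(y_j)-\varphi(a)$. What each buys: your argument actually delivers what the theorem literally claims --- that $\delta_a$ is the \emph{best} approximation among all $Ꝓ_m$, with an explicit $\lambda_0$ --- whereas the paper's Hessian computation only certifies a local minimum at the coincident configuration (and, as written, even muddles the $\mathcal O(1)$ versus $\mathcal O(1/\lambda)$ labels and ``$\lambda$ small'' versus ``$\lambda$ large''). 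The price is that you need bounded support (for the uniform constant $M_0$), nondegenerate quadratic growth of $\varphi$ at $a$, and uniqueness of the center; these are genuine additional assumptions not stated in the theorem, but they (or substitutes) would be needed to make the statement precise in any case, and you identify them honestly. One small technical point: the metric-projection step that confines the atoms to the convex hull of $\supp P$ is valid in the Euclidean/Hilbert setting the paper works in, but for general weighted $p$-norms the projection onto a convex set need not be distance-decreasing to every point of the set; restricting to a large ball around $\supp P$ (any far-away atom receives vanishing softmin weight and can only worsen the objective) is the safer reduction.
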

The enthralling interpretation of our master problem facilitates us to understand the transition from a complex hard optimization solution to the simple solution in Theorem~\ref{thm:198}. 
Moreover, along with the theoretical discussion, we provide an algorithm and numerical exemplification, which empirically demonstrate the robustness of the method. The forthcoming sections elucidate the robustness and asymptotic properties of the methods in detail. 



\paragraph{Outline of the paper.}
Section \ref{sec:Preliminaries} establishes the essential notations, definitions, and properties. Moreover, we comprehensively expound upon the significance of the smooth minimum, a pivotal component in our research. 
In Section~\ref{sec:Quantization}, we introduce the entropy-regularized optimal quantization problem and delve into its inherent properties.
Section \ref{sec:Tessellation} presents the discussion of soft tessellation, optimal weights and theoretically properties of parameter tuning.
Furthermore, we systematically illustrate the computational process along with a pseudo algorithm.
Section \ref{sec:Numerical} provides numerical examples, and empirically substantiates the theoretical proofs.
Finally, Section \ref{sec:Summary} summarize our study.

\newpage
%

\section{Preliminaries\label{sec:Preliminaries}}


In what follows, $(𝓧,d)$ is a Polish space.
The $σ$‑algebra generated by the Borel sets induced by the distance $d$ is $𝓕$, the set of all probability measures on $𝓧$ is $𝒫(𝓧)$.

\subsection{Distances and divergences of measures}\label{sec:Distances}
The standard quantization problem employs the Wasserstein distance to measure the quality of the approximation, which was initially studied by Monge and Kantorovich (cf.␠\citet{Monge1781}, \citet{Kantorovich1942}).
One of the remarkable properties of this distance is, it metrizes the weak* topology of measures.
\begin{definition}[Wasserstein distance]
	Let $P$ and $Ꝓ$ be probability measures on $(𝓧,d)$.
	The Wasserstein distance of order $r≥ 1$ of $P$ and $Ꝓ∈ 𝒫 (𝓧)$ is
	\begin{equation}
		dᵣ(P,Ꝓ)≔ \inf❪∬_{𝓧⨯𝓧} d(ξ,𝜉)ʳ π(ⅾξ,ⅾ𝜉)❫^{\nicefrac1r},
	\end{equation}
	where the infimum is among all measures $π∈ 𝒫(𝓧²)$ with marginals $P$ and $Ꝓ$, that is
	\begin{align}\label{eq:4}
		π(A⨯𝓧)&= P(A) \text{ and } ⏎
		π(𝓧⨯B)&= Ꝓ(B)
	\end{align}
	for all sets $A$ and $B∈ 𝓕$.
	The measures \[	π₁(‧)≔ π(‧⨯𝓧) \text{ and } π₂(‧)≔ π(𝓧⨯‧) \] on $𝓧$ are called the \emph{marginal measures} of the bivariate measure␠$π$.
\end{definition}
We may refer to the excellent monographs \cite{Villani2003, RachevRueschendorf} for a comprehensive discussion of the Wasserstein distance. 
\begin{remark}[Flexibility] \label{rem:170}
	In the subsequent discussion, our problem of interset is to approximate the measure $P$, which is a continuous, a discrete or mixed measure on $𝓧=ℝᵈ$.
	The measure␠$Ꝓ$ is used to approximate the measure␠$P$, which is a discrete measure. The definition of the Wasserstein distance flexibly comprises all the cases, namely continuous, semi-discrete, and discrete measures.
\end{remark}
 In contrast to the standard methodology, we investigate the quantization problem by utilizing an entropy version of the Wasserstein distance. The standard Wasserstein problem is regularized by adding the Kullback–Leibler divergence, which is also known as relative entropy.
\begin{definition}[Kullback–Leibler divergence]
	Let $P$ and $Q∈𝒫 (𝓧)$ be probability measures.
	Denote by $Z∈ L¹(P)$ the Radon–Nikodým derivative, $ⅾQ= Z ⅾP$, if $Q$ is absolutely continuous with respect to $P$ ($Q≪ P$).
	The \emph{Kullback–Leibler divergence} is
	\begin{equation}\label{eq:7-1}
		D(Q∥P)≔ \begin{cases}
			𝔼_P Z㏒Z= 𝔼_Q ㏒Z & \text{if }Q≪ P\text{ and }ⅾQ= Z ⅾP,	⏎
			+∞ & \text{else},
		\end{cases}
	\end{equation}
	where $𝔼_P$ ($𝔼_Q$, resp.\@) is the expectation with respect to the measure $P$ ($Q$, resp.).
\end{definition}
By Gibb’s inequality, the Kullback–Leibler divergence satisfies $D(Q∥P)≥ 0$ (non-negativity). 
However, $D$ is not a distance metric, as it does not satisfy the symmetry, and the triangle inequality properties.

We would like to emphasize the following distinctness to the Wasserstein distance (cf. Remark \ref{rem:170}): for the Kullback–Leibler divergence to be finite ($D(Q∥P)< ∞$), we necessarily have
\[	\supp Q⊂ \supp P, \]
where the support of the measure is (cf. \citet{Rueschendorf})
\[	\supp P≔ ⋂｛A∈ 𝓕∶ A\text{ is closed and }P(A)= 1｝.\]
If $P$ is a continuous measure on $𝓧= ℝᵈ$, then so is $Q$. If $P$ is a finite measure, then the support points of $P$ contain the support points of $Q$.

\subsection{The smooth minimum}
In what follows we present the smooth minimum in its general form, which includes discrete and continuous measures. Numerical computations in the following section rely on results on its discrete version. Therefore, we also address the special properties of its discrete version in detail.

\begin{definition}[Smooth minimum]
	Let $λ> 0$ and~$Y$ be a random variable.
	The \emph{smooth minimum}, or \emph{smooth minimum with respect to $Ꝓ$}, is
	\begin{align}
		\smin{Ꝓ; λ}(Y)	& ≔ -λ㏒𝔼_Ꝓ e^{-Y/λ} ⏎
						& = -λ㏒∫_𝓧 e^{-Y(η)/ λ} Ꝓ(ⅾη),\label{eq:177}
	\end{align}
	provided that the expectation (integral) of $e^{-Y/λ}$ is finite, and $\smin{Ꝓ; λ}(Y)≔ -∞$, if it is not finite.
	For $λ=0$, we set
	\begin{equation}\label{eq:373}
		\smin{Ꝓ; λ=0}(Y)≔ \essinf Y.
	\end{equation}

	For a $σ$‑algebra $𝓖⊂ 𝓕$ and $λ> 0$ measurable with respect to $𝓖$, the \emph{conditional smooth minimum} is 
	\[	\smin{Ꝓ;λ}(Y| 𝓖) ≔ -λ㏒𝔼_Ꝓ❪\left.e^{-Y/λ}\right| 𝓖❫. \]
\end{definition}

The following lemma relates the smooth minimum with the essential infimum (cf. \eqref{eq:373}), that is, colloquially, the ‘minimum’ of a random variable. As well, the result justifies the term \emph{smooth minimum}.
\begin{lemma}
	For $λ> 0$ it holds that 
		\begin{equation}\label{eq:336}
			\smin{Ꝓ;λ}(Y)≤ 𝔼_ꝒY
		\end{equation}
	and 
	\begin{equation}\label{eq:338}
		\essinf Y≤\smin{Ꝓ;λ}(Y)\xrightarrow[λ→0]{} \essinf Y.
	\end{equation}
\end{lemma}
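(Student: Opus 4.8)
The plan is to prove the three-part statement \eqref{eq:336}--\eqref{eq:338} separately, using the convexity/concavity of the exponential and logarithm together with a first-order expansion in $\lambda$.

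\textbf{Step 1 (Proof of \eqref{eq:336}).} The inequality $\smin{Ꝓ;λ}(Y)\le \E_\Ꝓ Y$ is a direct consequence of Jensen's inequality. Since $t\mapsto e^{-t/\lambda}$ is convex, $\E_\Ꝓ e^{-Y/\lambda}\ge e^{-\E_\Ꝓ Y/\lambda}$; taking $-\lambda\log(\cdot)$, which is a decreasing transformation, reverses the inequality and gives $-\lambda\log \E_\Ꝓ e^{-Y/\lambda}\le \E_\Ꝓ Y$. I would note that if $\E_\Ꝓ e^{-Y/\lambda}=+\infty$ the left-hand side is $-\infty$ by convention and the inequality holds trivially; if $\E_\Ꝓ Y=+\infty$ there is likewise nothing to prove.

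\textbf{Step 2 (The lower bound $\essinf Y\le \smin{Ꝓ;λ}(Y)$).} Write $m\coloneqq \essinf Y$. By definition $Y\ge m$ holds $\Ꝓ$-almost surely, hence $e^{-Y/\lambda}\le e^{-m/\lambda}$ a.s., so $\E_\Ꝓ e^{-Y/\lambda}\le e^{-m/\lambda}$. Applying the decreasing map $-\lambda\log(\cdot)$ yields $\smin{Ꝓ;λ}(Y)\ge m=\essinf Y$, which is the left inequality in \eqref{eq:338}. (If $m=-\infty$ there is nothing to prove.)

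\textbf{Step 3 (The limit $\smin{Ꝓ;λ}(Y)\xrightarrow[λ\to0]{}\essinf Y$).} Combined with Step 2, it suffices to show $\limsup_{\lambda\to0}\smin{Ꝓ;λ}(Y)\le m$. Fix any $\varepsilon>0$ and set $A_\varepsilon\coloneqq\{Y\le m+\varepsilon\}$, which has $\Ꝓ(A_\varepsilon)>0$ by the definition of the essential infimum. Then $\E_\Ꝓ e^{-Y/\lambda}\ge \int_{A_\varepsilon} e^{-Y/\lambda}\,\mathrm d\Ꝓ\ge \Ꝓ(A_\varepsilon)\,e^{-(m+\varepsilon)/\lambda}$, so $\smin{Ꝓ;λ}(Y)\le -\lambda\log\Ꝓ(A_\varepsilon)+m+\varepsilon$. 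Letting $\lambda\to0$ gives $\limsup_{\lambda\to0}\smin{Ꝓ;λ}(Y)\le m+\varepsilon$, and since $\varepsilon>0$ was arbitrary the claim follows. The monotone-looking arrow in \eqref{eq:338} is consistent with this; one could additionally remark that $\lambda\mapsto \smin{Ꝓ;λ}(Y)$ is nondecreasing in $\lambda$ (a standard fact about the exponential certainty-equivalent / cumulant functional), but that monotonicity is not needed for the stated convergence.

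\textbf{Main obstacle.} The only delicate point is the handling of the degenerate and unbounded cases: $\essinf Y=-\infty$, $\E_\Ꝓ e^{-Y/\lambda}=+\infty$ for some or all $\lambda$, or $\E_\Ꝓ Y=+\infty$. These are dispatched by the $-\infty$ convention in the definition and by observing that $\Ꝓ(A_\varepsilon)>0$ is guaranteed for every $\varepsilon>0$ regardless of finiteness, so the upper-bound argument in Step 3 never requires integrability of $e^{-Y/\lambda}$; the lower bound in Step 2 is unconditional. Everything else is a one-line application of Jensen's inequality and monotonicity of $\log$.
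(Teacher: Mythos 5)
Your proof is correct and follows essentially the same route as the paper: Jensen's inequality for \eqref{eq:336}, monotonicity of the operations for the lower bound in \eqref{eq:338}, and a Chernoff/Markov-type bound $\E_Ꝓ e^{-Y/\lambda}\ge Ꝓ(Y\le a)\,e^{-a/\lambda}$ with $a=m+\varepsilon$ arbitrary above the essential infimum for the limit. Your treatment of the degenerate cases and the explicit $\limsup$ phrasing are slightly more careful than the paper's, but the substance is identical.
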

\begin{proof}
	The inequality \eqref{eq:336} follows from Jensen’s inequality, applied to the convex function $x↦ \exp(-x/λ)$. 
	
	Next, the first inequality in the second display \eqref{eq:338} follows from $\essinf Y≤ Y$ and the fact that all operations in \eqref{eq:177} are monotonic.
	Finally, let $a> \essinf Y$. By Markov’s inequality, we have 
	\begin{equation}\label{eq:345}	𝔼_Ꝓ e^{-Y/λ}
		≥ e^{-a/λ} Ꝓ❨e^{-Y/λ} ≥ e^{-a/λ}❩
		= e^{-a/λ} Ꝓ(Y≤ a),
	\end{equation}
	which is a variant of Chernoff’s bound. From inequality \eqref{eq:345}, it follows that 
	\begin{equation}
		\smin{Ꝓ;λ}(Y)
			= -λ ㏒ 𝔼_Ꝓe^{-Y/λ}
			≤ -λ ㏒❨e^{-a/λ} Ꝓ(Y≤ a)❩
			= a+ λ ㏒⅟{Ꝓ(Y≤ a)}.
	\end{equation}
	When $λ>0$ and $λ→ 0$, we have that 
	\[	\smin{Ꝓ;λ}(Y) ≤ a, \]
	where $a$ is an arbitrary number with $a> \essinf Y$. This completes proof.
\end{proof}
\begin{remark}[Nesting property]
	The main properties of the smooth minimum include translation equivariance,
	\[	\smin{Ꝓ;λ}(Y+c)= \smin{Ꝓ;λ}(Y)+c, c∈ ℝ,\]
	and positive homogeneity,
	\[	\smin{Ꝓ;γ‧λ}(γ‧ Y)= γ‧\smin{Ꝓ;λ}(Y), γ> 0.\]
	As a consequence of the tower property of the expectation, we have the nesting property
	\[	\smin{Ꝓ;λ}⟮\smin{Ꝓ;λ}(Y| 𝓖)⟯
		= \smin{Ꝓ;λ}(Y), \]
	provided that $𝓖$ is a sub‑$σ$‑algebra of $𝓕$.
\end{remark}

\subsection{Softmin function}\label{sec:SoftminFunction}
The smooth minimum is related to the softmin function via its derivatives.
In what follows, we express variants of its derivatives, which are involved later.
\begin{definition}[Softmin function]\label{def:Softmin}
	For $λ>0$ and a random variable $Y$ with finite smooth minimum, the \emph{softmin function} is the random variable
	\begin{equation}\label{eq:226}
		σ_λ(Y)≔ \exp❪-{Y- \smin{Ꝓ;λ}(Y) ∕ λ}❫= \frac{e^{-Y/λ}}{𝔼_Ꝓ e^{-Y/λ}},
	\end{equation}
	where the latter equality is obvious with the definition of the smooth minimum in \eqref{eq:177}.
	The function $σ_λ(Y)$ is also called the \emph{Gibbs density}.
\end{definition}
\subsubsection*{The derivative with respect to the probability measure}
The definition of the smooth minimum in␣\eqref{eq:177} does not require the measure $Ꝓ$ to be a probability measure.
Based on ${∂∕∂t}\log(a+t‧h)= {h∕a}$ (at $t=0$) for the natural logarithm, the directional derivative of the smooth minimum in direction of the measure␣$Q$ is
\begin{align}
	⅟t❨\smin{Ꝓ+t‧Q;λ}(Y) - \smin{Ꝓ;λ}(Y)❩
		&= -{λ∕t}❪\log∫_𝓧e^{-Y/λ} ⅾ❨Ꝓ+t‧Q❩- \log∫_𝓧e^{-Y/λ}␠ⅾꝒ❫ ⏎
		&\xrightarrow[t→0]{} -λ‧{∫_𝓧e^{-Y/λ} ⅾQ ∕ ∫_𝓧e^{-Y/λ}␠ⅾꝒ}⏎
		&= -λ‧∫_𝓧σ_λ(Y) ⅾQ\label{eq:412}.
\end{align}
Note, that $-λ σ_λ$ is (up to the constant $-λ$) a Radon–Nikodým density in␣\eqref{eq:412}. The Gibbs density $σ_λ(Y)$ thus is proportional to the directional derivative of the smooth minimum with respect to the underlying measure $Ꝓ$.
\subsubsection*{The derivative with respect to the random variable}
In what follows we shall need the derivative of the smooth minimum with respect to its argument as well. With a similar reasoning as above, this is accomplished by
\begin{align}
	⅟t❨\smin{Ꝓ;λ}(Y+t‧Z) - \smin{Ꝓ;λ}(Y)❩
		&= -{λ∕t}❪\log∫_𝓧e^{-(Y+t‧Z)/λ} ⅾꝒ- \log∫_𝓧e^{-Y/λ}␠ⅾꝒ❫ ⏎
		&= -{λ∕t}❪\log∫_𝓧e^{-Y/λ}⟮1-{t∕λ}Z+ 𝒪(t²)⟯ ⅾꝒ- \log∫_𝓧e^{-Y/λ}␠ⅾꝒ❫ ⏎
		&\xrightarrow[t→0]{} {∫_𝓧Z‧e^{-Y/λ} ⅾꝒ ∕ ∫_𝓧e^{-Y/λ}␠ⅾꝒ}⏎
		&= ∫_𝓧Z‧σ_λ(Y)␠ⅾꝒ,
\end{align}
which involves the softmin function␣$σ_λ(‧)$ as well.

\section{Regularized quantization}\label{sec:Quantization}
This section introduces the entropy regularized optimal quantization problem along with its properties, and recalls the standard optimal quantization problem first.

The standard quantization measures the quality of the approximation by the Wasserstein distance and considers the problem (cf.␠\citet{GrafLuschgy})
\begin{equation}\label{eq:Classical}
	\inf_{π∶\substack{π₁=P, ⏎ π₂∈ 𝒫ₘ(𝓧)}} ∬_{𝓧⨯𝓧}d(ξ,𝜉) π(ⅾξ,ⅾ𝜉),
\end{equation}
where
\begin{equation}\label{eq:468}
	𝒫ₘ(𝓧)≔ ｛Ꝓₘ∈ 𝒫 (𝓧)∶ Ꝓₘ= ∑_{j=1}ᵐ 𝑝ⱼ δ_{yⱼ}｝
\end{equation}
is the set of measures on $𝓧$ supported by not more than $m$ ($m∈ ℕ$) points.

Soft quantization (or quantization, regularized with Kullback–Leibler divergence), instead of␣\eqref{eq:Classical} involves the regularized Wasserstein distance.
The soft quantization problem is regularized with the Kullback–Leibler divergence, it is
\begin{equation}\label{eq:softQuant}
	\inf
		\big\{𝔼_π dʳ+ λ‧D(π∥ P⨯ Ꝓₘ)∶ π₁=P \text{ and }π₂= Ꝓₘ∈ 𝒫ₘ(𝓧)\big\},
\end{equation}
where $λ>0$ and $𝔼_πdʳ= ∬_{𝓧²}d(ξ,𝜉)ʳ π(ⅾξ,ⅾ𝜉)$.
The optimal measure $Ꝓₘ∈ 𝒫ₘ(𝓧)$ solving \eqref{eq:softQuant} depends on the regularization parameter $λ$.


In the following discussion, we initially investigate the regularized approximation, which also demonstrates existence of the optimal approximation.


\subsection{Approximation with inflexible marginal measures}
The following proposition addresses the optimal approximation problem, regularized with Kullback–Leibler divergence and fixed marginals.
To this end, dissect the infimum in the soft quantization problem \eqref{eq:softQuant} as
\begin{equation}\label{eq:303}
	\inf_{Ꝓₘ∈ 𝒫ₘ(𝓧)}\inf_{π∶\substack{π₁=P,⏎ π₂=Ꝓₘ}} 𝔼_π dʳ+λ‧D(π∥ P⨯ Ꝓₘ),
\end{equation}
where the marginals $P$ and $Ꝓₘ$ are fixed in the inner infimum.

The following Proposition \ref{prop:Best-1} addresses this problem with fixed bivariate distribution, which is the inner infimum in␣\eqref{eq:303}.
Then, Proposition␣\ref{prop:444} reveals that the optimal marginals coincide in this case.
\begin{proposition}\label{prop:Best-1}
	Let␣$P$ be a probability measure and $λ> 0$.
	The inner optimization problem in␣\eqref{eq:303} relative to the fixed bivariate distribution $P⨯Ꝓ$ is given by the explicit formula
	\begin{align}\label{eq:Wasserstein-3}
		\inf_{π∶ π₁= P} 𝔼_π dʳ+ λ‧ D(π∥P⨯ Ꝓ)
			&= -λ ∫_𝓧 ㏒∫_𝓧 e^{-d(ξ,𝜉)ʳ/λ} Ꝓ(ⅾ𝜉) P(ⅾξ)	⏎
			&= 𝔼_{ξ∼P}⟮\smin{𝜉∼Ꝓ;λ}d(ξ,𝜉)ʳ⟯.	\label{eq:253}
	\end{align}
	Further, the infimum in␣\eqref{eq:Wasserstein-3} is attained.
\end{proposition}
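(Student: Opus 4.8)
The plan is to fix the discrete marginal $Ꝓ$ and solve the inner problem in \eqref{eq:303} by duality/variational calculus, treating it as a minimization over couplings $π$ with $π₁=P$ but with $π₂$ \emph{not} constrained. Since only the marginal $π₁=P$ is fixed, I would disintegrate $π$ with respect to its first marginal, writing $π(ⅾξ,ⅾ𝜉)= P(ⅾξ)\,π_ξ(ⅾ𝜉)$ where $π_ξ\in𝒫(𝓧)$ for $P$-a.e.\ $ξ$; this is legitimate because $𝓧$ is Polish. The key observation is that the reference measure $P⨯Ꝓ$ factorizes the same way, $P(ⅾξ)\,Ꝓ(ⅾ𝜉)$, so the Kullback–Leibler term disintegrates cleanly: $D(π∥P⨯Ꝓ)= ∫_𝓧 D(π_ξ∥Ꝓ)\,P(ⅾξ)$, and likewise $𝔼_π dʳ = ∫_𝓧 \big(∫_𝓧 d(ξ,𝜉)ʳ\,π_ξ(ⅾ𝜉)\big)\,P(ⅾξ)$. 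Hence the objective becomes $∫_𝓧 \big[\,𝔼_{π_ξ} d(ξ,·)ʳ + λ\,D(π_ξ∥Ꝓ)\,\big]\,P(ⅾξ)$, and — crucially — the inner bracket for each $ξ$ can be optimized over $π_ξ\in𝒫(𝓧)$ \emph{independently}, since there is no coupling constraint linking the $π_ξ$'s across different $ξ$ (the only global constraint, $π₁=P$, is already built into the disintegration).

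Next I would solve the pointwise problem $\inf_{\nu\in𝒫(𝓧)} \big\{∫_𝓧 c(𝜉)\,\nu(ⅾ𝜉) + λ\,D(\nu∥Ꝓ)\big\}$ with $c(𝜉):= d(ξ,𝜉)ʳ$. This is the classical Gibbs variational principle: the minimizer is the tilted measure $\nu^\*(ⅾ𝜉)\propto e^{-c(𝜉)/λ}Ꝓ(ⅾ𝜉)$, i.e.\ exactly $σ_λ\big(d(ξ,·)ʳ\big)\,Ꝓ(ⅾ𝜉)$ in the notation of \eqref{eq:226}, and the optimal value is $-λ\log∫_𝓧 e^{-c(𝜉)/λ}Ꝓ(ⅾ𝜉) = \smin{𝜉∼Ꝓ;λ} d(ξ,𝜉)ʳ$. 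To verify this rigorously I would write, for any competitor $\nu\ll Ꝓ$ with density $Z=ⅾ\nu/ⅾꝒ$, the identity $∫ c\,ⅾ\nu + λ D(\nu∥Ꝓ) = λ D\big(\nu\,\big\|\,\nu^\*\big) - λ\log∫ e^{-c/λ}ⅾꝒ$ (completing the square inside the log), and then invoke Gibbs' inequality $D(\nu∥\nu^\*)\ge 0$ stated in the excerpt, with equality iff $\nu=\nu^\*$. Integrating the optimal pointwise value against $P(ⅾξ)$ yields \eqref{eq:Wasserstein-3}–\eqref{eq:253}. Attainment follows because the pointwise minimizer $σ_λ\big(d(ξ,·)ʳ\big)\,Ꝓ(ⅾ𝜉)$ is explicit; assembling $π^\*(ⅾξ,ⅾ𝜉):= P(ⅾξ)\,σ_λ\big(d(ξ,𝜉)ʳ\big)\,Ꝓ(ⅾ𝜉)$ gives a coupling with $π^\*_1=P$ realizing the infimum, provided this assembled object is a genuine (measurable) probability measure.

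I expect the main obstacle to be the \textbf{measurability and integrability bookkeeping}, not the variational core. Specifically: (i) one must check that $ξ\mapsto π_ξ$ can be chosen to be a measurable kernel, so that the disintegration $D(π∥P⨯Ꝓ)= ∫ D(π_ξ∥Ꝓ)\,P(ⅾξ)$ is valid — this is standard on Polish spaces but deserves a citation; (ii) one must ensure $ξ\mapsto \smin{𝜉∼Ꝓ;λ} d(ξ,𝜉)ʳ$ is $P$-integrable (or at least that the outer integral is well-defined in $[-∞,+∞)$), which by \eqref{eq:336} is bounded above by $𝔼_Ꝓ d(ξ,𝜉)ʳ$ and hence follows from a finite-$r$-th-moment assumption on $P$ and on $Ꝓ$ (the latter being automatic as $Ꝓ\in𝒫_m$ is finitely supported); and (iii) one must confirm the optimal kernel $ξ\mapsto σ_λ(d(ξ,·)ʳ)$ depends measurably on $ξ$, so the assembled $π^\*$ is a legitimate element of $𝒫(𝓧^2)$ — this follows from joint measurability of $(ξ,𝜉)\mapsto d(ξ,𝜉)$. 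Once these points are dispatched, the Gibbs-principle computation is a one-line application of Jensen/Gibbs exactly as the lemma preceding Definition~\ref{def:Softmin} anticipates.
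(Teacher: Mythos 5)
Your proposal is correct, and it reaches \eqref{eq:Wasserstein-3} by a genuinely different route than the paper. The paper parametrizes every feasible $\pi$ globally through a log-density $\Phi=\log Z$, normalized row-wise as in \eqref{eq:285} so that the constraint $\pi_1=P$ is automatic, and then computes the directional derivative of the resulting functional $f(\Phi)$ in an arbitrary direction $h$; the first-order condition yields the stationary point $\Phi(\xi,\eta)=-d(\xi,\eta)^r/\lambda$, at which the first term of \eqref{eq:9} vanishes and the claimed value remains. You instead disintegrate $\pi$ over its first marginal, use the chain rule $D(\pi\Vert P\times Ꝓ)=\int D(\pi_\xi\Vert Ꝓ)\,P(d\xi)$ to decouple the problem into independent pointwise problems, and solve each by the Gibbs variational principle via the exact identity $\int c\,d\nu+\lambda D(\nu\Vert Ꝓ)=\lambda D(\nu\Vert\nu^\ast)-\lambda\log\int e^{-c/\lambda}\,dꝒ$. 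Your route buys something the paper's argument does not deliver explicitly: the inequality $D(\nu\Vert\nu^\ast)\ge0$ certifies that the tilted kernel is the \emph{global} minimizer (with uniqueness), whereas the paper only verifies a vanishing first variation and asserts that the minimum is attained there without a second-order or convexity argument. Both proofs identify the same optimal coupling $\pi^\ast(d\xi,d\mathit{\xi})=P(d\xi)\,\sigma_\lambda(d(\xi,\cdot)^r)\,Ꝓ(d\mathit{\xi})$, consistent with Lemma~\ref{lem:356}. The measurability and integrability caveats you list are real but routine on a Polish space, and the paper passes over them silently as well; note only that the existence of a measurable disintegration is exactly the standard regular-conditional-probability theorem, so your point (i) needs a citation but no new argument.
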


\begin{remark}
	The notation in␣\eqref{eq:253} (\eqref{eq:260} below, resp.\@) is chosen to reflect the explicit expression␣\eqref{eq:Wasserstein-3}:
	while the soft minimum $\smin{Ꝓ;λ}$ is with respect to the measure $Ꝓ$, which is associated with the variable $𝜉$, the expectation $𝔼_P$ is with respect to $P$, its associated variable is $ξ$
	(that is, the variable␣$ξ$ in␣\eqref{eq:253} is associated with $P$, the variable␣$𝜉$ with␣$Ꝓ$).
\end{remark}
\begin{remark}
	The result␣\eqref{eq:253} extends
	\begin{align}
		\inf_{π∶ π₁=P} 𝔼_π dʳ
				&= ∫_𝓧  \min_{ξ∈ \supp Ꝓ} d(ξ,𝜉)ʳ P(ⅾξ)	\label{eq:259} ⏎
				&= 𝔼_P ⟮\min_{𝜉∈ \supp Ꝓ} d(ξ,𝜉)ʳ⟯,		\label{eq:260}
	\end{align}
	which is the formula without regularization (i.e., $λ=0$, cf.␠\citet{PflugPichlerBuch}).
	Note that the preceding display explicitly involves the support $\supp Ꝓ$, while \eqref{eq:Wasserstein-3} only involves the expectation (via the smooth minimum) with respect to the measure $Ꝓ$.
\end{remark}
\begin{proof}[Proof of Proposition␣\ref{prop:Best-1}]
	It follows from the definition of the Kullback–Leibler divergence in␣\eqref{eq:7-1} that it is enough to consider measures $π$, which are absolutely continuous with respect to the product measure, $π ≪ P\times Ꝓ$; otherwise, the objective is not finite.
	Hence, there is a Radon–Nikodým density $𝑍$ such that, with Fubini's theorem, 
	\[	π(A⨯ B)= ∫_A∫_B 𝑍(ξ,η) Ꝓ(ⅾη)P(ⅾξ). \]
	For the marginal constraint $π(A⨯𝓧)= P(A)$ to be satisfied (cf.␣\eqref{eq:4}), we have that 
	\[	∫_A∫_𝓧 𝑍(ξ,η)Ꝓ(ⅾη) P(ⅾξ)
			= π(A⨯𝓧)= P(A)= ∫_A 1 P(ⅾξ) \]
	for every measurable set $A$. It follows that 
	\[	∫_𝓧 𝑍(ξ,η) Ꝓ(ⅾη) = 1 P(ⅾξ)\text{ almost everywhere}. \]
	We conclude that every density of the form 
	\begin{equation}\label{eq:285}
		𝑍(ξ,η)= \frac{Z(ξ,η)}{∫_𝓧 Z(ξ,η′) Ꝓ(ⅾη′)}
	\end{equation}
	satisfies the constraint␣\eqref{eq:4}, irrespective of␣$Z$ and conversely, every␣$Z$ – via␣$𝑍$ in␣\eqref{eq:285} – defines a bivariate measure $π$ satisfying the constraints␣\eqref{eq:4}.
	We set $Φ(ξ,η)≔ \log Z(ξ,η)$ (with the convention that $\log0=-∞$ and $\exp(-∞)=0$, resp.\@) and consider
	\[	𝑍(ξ,η)= {e^{Φ(ξ,η)} ∕ ∫_𝓧 e^{Φ(ξ,η′)} Ꝓ(ⅾη′)}.\]
	With that, the divergence is 
	\begin{align*}\MoveEqLeft[3]
		D(π∥P⨯ Ꝓ)= ⏎
		& =∫_𝓧∫_𝓧 {e^{Φ(ξ,η)} ∕ ∫_𝓧 e^{Φ(ξ,η′)} Ꝓ(ⅾη′)}
				㏒{e^{Φ(ξ,η)} ∕ ∫_𝓧e^{Φ(ξ,η′)} Ꝓ(ⅾη′)}Ꝓ(ⅾη)P(ⅾξ) ⏎
		& =∫_𝓧∫_𝓧 {e^{Φ(ξ,η)} ∕ ∫_𝓧 e^{Φ(ξ,η′)} Ꝓ(ⅾη′)}Φ(ξ,η) Ꝓ(ⅾη)P(ⅾξ) ⏎
		 & -∫_𝓧\frac{e^{Φ(ξ,η)}}{∫_𝓧 e^{Φ(ξ,η′)} Ꝓ(ⅾη′)}
				㏒∫_𝓧 e^{Φ(ξ,η′)} Ꝓ(ⅾη′)Ꝓ(ⅾη)P(ⅾξ) ⏎
		& =∫_𝓧∫_𝓧\frac{e^{Φ(ξ,η)}}{∫_𝓧e^{Φ(ξ,η′)} Ꝓ(ⅾη′)}Φ(ξ,η)
			-㏒∫_𝓧 e^{Φ(ξ,η′)} Ꝓ(ⅾη′)P(ⅾξ).
	\end{align*}
	For the other term in the objective \eqref{eq:softQuant}, we have 
	\[	𝔼_π dʳ
		= ∫_𝓧∫_𝓧\frac{e^{Φ(ξ,η)}}{∫_𝓧e^{Φ(ξ,η′)} Ꝓ(ⅾη′)} d(ξ,η)ʳ Ꝓ(ⅾη)P(ⅾξ). \]
	Combining the last expressions obtained, the objective in␣\eqref{eq:Wasserstein-3} is
	\begin{align}
		𝔼_π dʳ+ λ D(π∥P⨯ Ꝓ) &
			=∫_𝓧∫_𝓧{e^{Φ(ξ,η)} ∕ ∫_𝓧 e^{Φ(ξ,η′)} Ꝓ(ⅾη′)}⟮d(ξ,η)ʳ
				+ λ Φ(ξ,η)⟯ Ꝓ(ⅾη)P(ⅾξ)\label{eq:9} ⏎
			& -λ∫_𝓧 ㏒∫_𝓧 e^{Φ(ξ,η′)} Ꝓ(ⅾη′)P(ⅾξ)
	\end{align}
	For $ξ$ fixed ($ξ$ is simply suppressed in the following two
	displays to abbreviate the notation), consider the function 
	\[	f(Φ)≔ ∫_𝓧{e^{Φ(η)} ∕ ∫_𝓧 e^{Φ(η′)} Ꝓ(ⅾη′)} ❨d(η)ʳ+ λ Φ(η)❩ Ꝓ(ⅾη)
				-λ ㏒∫_{𝓧}e^{Φ(η′)} Ꝓ(ⅾη′). \]
	The directional derivative in direction $h$ of this function is
	\begin{align}\MoveEqLeft[3]
		\lim_{t→0}⅟t❨f(Φ+t h)-f(Φ)❩= ⏎
		& =∫_𝓧\frac{e^{Φ(η)}}{∫_𝓧e^{Φ(η′)} Ꝓ(ⅾη′)}⟮d(η)ʳ+λ Φ(η)-λ⟯h(η) Ꝓ(ⅾη) ⏎
		& -∫_𝓧\frac{e^{Φ(η)}∫_𝓧 e^{Φ(η′)}h(η′) Ꝓ(ⅾη′)}{❨∫_𝓧 e^{Φ(η′)} Ꝓ(ⅾη′)❩²}⟮d(η)ʳ+λ Φ(η)⟯ Ꝓ(ⅾη)⏎
		& +λ∫_𝓧\frac{e^{Φ(η)}h(η)}{∫_𝓧 e^{Φ(η′)} Ꝓ(ⅾη′)}Ꝓ(ⅾη)	⏎
		& =∫_𝓧\frac{e^{Φ(η)}}{∫_𝓧 e^{Φ(η′)} Ꝓ(ⅾη′)}⟮d(η)ʳ+λ Φ(η)⟯h(η) Ꝓ(ⅾη)\label{eq:14}⏎
		& -∫_𝓧\frac{e^{Φ(η)}∫_𝓧 e^{Φ(η′)}h(η′) Ꝓ(ⅾη′)}{❪∫_𝓧 e^{Φ(η′)} Ꝓ(ⅾη′)❫²}⟮d(η)ʳ+λ Φ(η)⟯ Ꝓ(ⅾη).\label{eq:15}
	\end{align}
	By␣\eqref{eq:14} and␣\eqref{eq:15}, the derivative vanishes for every function $h$, if $d(η)ʳ+ λ Φ(η)= 0$.
	As␣$ξ$ was arbitrary, the general minimum is attained for $Φ(ξ,η)= -d(ξ,η)ʳ/ λ$.
	With that, the first expression in␣\eqref{eq:9} vanishes, and we conclude that 
	\begin{align*}
		\inf_π 𝔼_π dʳ+ λ D(π∥P⨯ Ꝓ) 
			& = -λ∫_𝓧 ㏒∫_𝓧 e^{-d(ξ,η)ʳ/λ} Ꝓ(ⅾη)P(ⅾξ) ⏎
			& = 𝔼_P ❪\smin{Ꝓ;λ}d(ξ,𝜉)ʳ❫.
	\end{align*}

	Finally, notice that the variable $Z(ξ,η)= e^{Φ(ξ,η)}$ is completely arbitrary for the problem␣\eqref{eq:Wasserstein-3} involving the Wasserstein distance and the Kullback–Leibler divergence.
	As outlined above, for every measure $π$ with finite divergence $D(π∥P⨯Ꝓ)$, there is a density␣$Z$ as considered above. 
	With that, the assertion of Proposition␣\ref{prop:Best-1} follows.
\end{proof}

\begin{remark}\label{rem:328}
	The preceding proposition considers probability measures $π$ with marginal $π₁= P$. Its first marginal distribution is (trivially) absolutely continuous with respect to $P$, $π₁≪ P$, as $π₁= P$.

	The second marginal $π₂$, however, is not specified.
	But for $π$ to be feasible in␣\eqref{eq:Wasserstein-3}, its Kullback–Leibler divergence with respect to $P⨯Ꝓ$ is finite.
	There is hence a (non-negative) Radon–Nikodým density␣$Z$ so that 
	\[	π₂(B)= π(𝓧⨯B)= ∬_{𝓧⨯B} Z(ξ,η) P(ⅾξ)Ꝓ(ⅾη).	\]
	It follows from Fubini’s theorem that 
	\[	π₂(B)= ∫_B∫_𝓧 Z(ξ,η) P(ⅾξ)Ꝓ(ⅾη)= ∫_B Z(η) Ꝓ(ⅾη), \]
	where $Z(η)≔ ∫_𝓧 Z(ξ,η) P(ⅾξ)$.
	The second marginal thus is absolutely continuous with respect to $Ꝓ$, $π₂≪ Ꝓ$.
\end{remark}

Proposition␣\ref{prop:Best-1} characterizes the \emph{objective} of the	quantization problem.
Its proof, implicitly, reveals the marginal of the best approximation as well. The following lemma spells out the density of the marginal of the optimal measure with respect to $Ꝓ$ explicitly.
\begin{lemma}[Characterization of the best approximating measure]\label{lem:356}
	The best approximating marginal probability measure minimizing \eqref{eq:Wasserstein-3} has density
	\[	Z(𝜉)= 𝔼_P σ_λ❪d(ξ,𝜉)ʳ❫= ∫_𝓧 σ_λ❪d(ξ,𝜉)ʳ❫ P(ⅾξ), \]
	where $σ_λ(‧)$ is the softmin function (cf.␠Definition \ref{def:Softmin}).
\end{lemma}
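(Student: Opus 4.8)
The plan is to read the optimal density off the proof of Proposition~\ref{prop:Best-1}. That proof shows the optimal coupling $π$ has Radon--Nikodým density with respect to $P⨯Ꝓ$ of the form $𝑍(ξ,η)= e^{Φ(ξ,η)}∕∫_𝓧 e^{Φ(ξ,η′)} Ꝓ(ⅾη′)$ with the minimizing potential $Φ(ξ,η)= -d(ξ,η)ʳ∕λ$. Substituting this $Φ$ gives the explicit optimal density
\[
	𝑍(ξ,η)= \frac{e^{-d(ξ,η)ʳ/λ}}{∫_𝓧 e^{-d(ξ,η′)ʳ/λ} Ꝓ(ⅾη′)},
\]
which is precisely the softmin function $σ_λ\bigl(d(ξ,𝜉)ʳ\bigr)$ evaluated at $𝜉=η$, with the smooth minimum taken over $𝜉∼Ꝓ$ (compare Definition~\ref{def:Softmin} and equation~\eqref{eq:226}).

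Next I would invoke Remark~\ref{rem:328}: for any feasible $π$ the second marginal $π₂$ is absolutely continuous with respect to $Ꝓ$, and its density is obtained by integrating the bivariate density over the first variable against $P$. Applying this to the optimal $𝑍$ yields
\[
	Z(𝜉)= ∫_𝓧 𝑍(ξ,𝜉)\, P(ⅾξ)= ∫_𝓧 σ_λ\bigl(d(ξ,𝜉)ʳ\bigr)\, P(ⅾξ)= 𝔼_P\, σ_λ\bigl(d(ξ,𝜉)ʳ\bigr),
\]
which is exactly the claimed formula. I would also remark briefly that this $Z$ is a genuine probability density with respect to $Ꝓ$, i.e.\ $∫_𝓧 Z(𝜉)\,Ꝓ(ⅾ𝜉)=1$, which follows by Fubini and the normalization $∫_𝓧 σ_λ(Y)\,ⅾꝒ=1$ built into the definition of the softmin function.

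There is essentially no new computation here; the lemma is a bookkeeping corollary of the proposition's proof together with Remark~\ref{rem:328}. The only point requiring a little care -- and the one I would treat as the ``main obstacle'' -- is the measurability and integrability justification for swapping the order of integration when passing from the bivariate density to the marginal density, so that $Z(𝜉)=∫_𝓧 σ_λ(d(ξ,𝜉)ʳ)\,P(ⅾξ)$ is well defined $Ꝓ$-almost everywhere; this is exactly the Fubini step already carried out in Remark~\ref{rem:328} applied to the nonnegative integrand $σ_λ(d(ξ,η)ʳ)$, so it causes no real difficulty. I would therefore keep the proof to a few lines: recall the optimal $Φ$, identify $𝑍$ with $σ_λ$, and apply the marginalization formula of Remark~\ref{rem:328}.
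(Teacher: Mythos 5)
Your proposal is correct and follows essentially the same route as the paper: recall the optimal bivariate density $𝑍(ξ,𝜉)= e^{-d(ξ,𝜉)ʳ/λ}\big/\,𝔼_Ꝓ e^{-d(ξ,𝜉)ʳ/λ}$ from the proof of Proposition~\ref{prop:Best-1}, identify it with the softmin, and marginalize over $ξ$ against $P$ as in Remark~\ref{rem:328}. The added remarks on normalization and the Fubini step are harmless elaborations of what the paper does implicitly.
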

\begin{proof}
	Recall from the proof or Proposition␣\ref{prop:Best-1} the density
	\[	𝑍(ξ,𝜉)
			= {e^{-d(ξ,𝜉)ʳ/λ} ∕ 𝔼_Ꝓ e^{-d(ξ,𝜉)ʳ/λ}} \]
	of the optimal measure $π$ relative to $P⨯Ꝓ$. From that we derive that 
	\[	π₂(B)= π(𝓧⨯ B)
			= ∫_B∫_𝓧 {e^{-d(ξ,𝜉)ʳ/λ} ∕ 𝔼_Ꝓe^{-d(ξ,𝜉)ʳ/λ}} P(ⅾξ)Ꝓ(ⅾ𝜉) \]
	so that 
	\[	Z(𝜉)= ∫_𝓧{e^{-d(ξ,𝜉)ʳ/λ} ∕ 𝔼_Ꝓe^{-d(ξ,𝜉)ʳ/ λ}}P(ⅾξ)
			= 𝔼_P σ_λ❨d(ξ,𝜉)ʳ❩ \]
	is the density with respect to $Ꝓ$, that is $ⅾπ₂=Z ⅾꝒ$ (i.e., $π₂(ⅾ𝜉)=Z(𝜉) Ꝓ(ⅾ𝜉)$).
\end{proof}

\subsection{Approximation with flexible marginal measure}
The following proposition reveals that the best approximation of a bivariate measure in terms of a product of independent measures is given by the product of its marginals.
With that it follows that the objectives in␣\eqref{eq:303} and␣\eqref{eq:Wasserstein-3} coincide for $Ꝓ=π₂$.
\begin{proposition}\label{prop:444}
	Let $P$ be measure and $π$ be a bivariate measure with marginal $π₁=P$ and $π₂$. Then it holds that
	\begin{equation}\label{eq:446}
		D(π∥ P⨯π₂) ≤ D(π∥ P⨯Ꝓ),
	\end{equation}
	where $Ꝓ$ is an arbitrary measure.
\end{proposition}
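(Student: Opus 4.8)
The plan is to show that among all measures $Ꝓ$, the second marginal $π₂$ minimizes $D(π \parallel P\times Ꝓ)$, and then to quantify the gap explicitly. First I would use the chain rule / decomposition of the Kullback–Leibler divergence. Writing $ⅾπ = 𝑍(ξ,𝜉)\, ⅾ(P\times Ꝓ)$ whenever $π \ll P\times Ꝓ$ (and noting both sides of \eqref{eq:446} are $+∞$ otherwise), I would split the density as in Remark~\ref{rem:328}: with $Z(𝜉) = ∫_𝓧 𝑍(ξ,𝜉)\, P(ⅾξ)$ the $π₂$-density with respect to $Ꝓ$, one has $ⅾπ₂ = Z\, ⅾꝒ$, and $π \ll P\times π₂$ with density $𝑍(ξ,𝜉)/Z(𝜉)$ (defined $π$-a.e.).

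Then I would compute directly:
\begin{align*}
	D(π \parallel P\times Ꝓ)
		&= 𝔼_π ㏒ 𝑍(ξ,𝜉)
		= 𝔼_π ㏒\frac{𝑍(ξ,𝜉)}{Z(𝜉)} + 𝔼_π ㏒ Z(𝜉) ⏎
		&= D(π \parallel P\times π₂) + 𝔼_{π₂} ㏒ Z(𝜉)
		= D(π \parallel P\times π₂) + D(π₂ \parallel Ꝓ),
\end{align*}
where the middle equality uses that $㏒ Z(𝜉)$ depends only on $𝜉$, so its $π$-expectation equals its $π₂$-expectation, and the last equality is the definition of $D(π₂\parallel Ꝓ)$ since $ⅾπ₂ = Z\, ⅾꝒ$. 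The inequality \eqref{eq:446} now follows immediately from Gibbs' inequality $D(π₂ \parallel Ꝓ) \ge 0$, with equality exactly when $Ꝓ = π₂$.

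The main obstacle is bookkeeping rather than conceptual: one must handle the absolute-continuity hypotheses carefully so that all the densities are well-defined and Fubini applies (as already set up in Remark~\ref{rem:328}), and one must check the degenerate cases — if $D(π\parallel P\times Ꝓ)=+∞$ the inequality is trivial, and if it is finite then $π\ll P\times Ꝓ$ forces $π₂ \ll Ꝓ$, so $D(π₂\parallel Ꝓ)$ is itself finite and the decomposition is legitimate with no $∞-∞$ ambiguity. Once this identity is in hand, combining it with Proposition~\ref{prop:Best-1} applied to $Ꝓ = π₂$ shows that the outer infimum in \eqref{eq:303} is attained when the free marginal equals the second marginal of the optimal coupling, which is the point of stating the proposition here.
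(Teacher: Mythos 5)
Your proof is correct and follows essentially the same route as the paper: both establish the chain-rule identity $D(π\parallel P⨯Ꝓ)= D(π\parallel P⨯π₂)+ D(π₂\parallel Ꝓ)$ and conclude by Gibbs' inequality, the only difference being that the paper expands $D(π₂\parallel Ꝓ)$ while you decompose $D(π\parallel P⨯Ꝓ)$. Your explicit treatment of the degenerate (non-absolutely-continuous) cases is slightly more careful than the paper's one-line remark, but the argument is the same.
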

\begin{proof}
	Define the Radon–Nikodým density $Z(η)≔ {π₂(ⅾη) ∕ Ꝓ(ⅾη)}$ and observe that the extension $Z(ξ,η)≔ Z(η)$ to $𝓧⨯𝓧$ is the density $Z= {ⅾP⨯π₂ ∕ ⅾP⨯Ꝓ}$.
	It follows with␣\eqref{eq:7-1} that 
	\begin{align}\label{eq:452}
		0 	≤ D(π₂∥ Ꝓ)&= 𝔼_{π₂}\log {ⅾπ₂∕ⅾꝒ} ⏎
		  & = 𝔼_π\log{ⅾ P⨯π₂ ∕ ⅾ P⨯ Ꝓ} ⏎
		  & = 𝔼_π❨\log{ⅾ π ∕ ⅾ P⨯Ꝓ}-\log{ⅾ π ∕ ⅾ P⨯π₂}❩ ⏎
		  &	= D(π∥ P⨯Ꝓ) - D(π∥ P⨯π₂),
	\end{align}
	which is the assertion.
	In case the measures are not absolutely continuous, the assertion␣\eqref{eq:452} is trivial.
\end{proof}
Suppose now that $π$ is a solution of the master problem \eqref{eq:Wasserstein-3} with some $Ꝓ$.
It follows from the preceding proposition that the objective \eqref{eq:Wasserstein-3} improves when replacing the initial $Ꝓ$ by the marginal of the optimal solution, $Ꝓ= π₂$.

\subsection{The relation of soft quantization and entropy}
	The soft quantization problem \eqref{eq:Wasserstein-3} involves the Kullback–Leibler divergence and \emph{not} the entropy.
	The major advantage of the formulation presented above is that it works for discrete, continuous or mixed measures, while entropy usually needs to be defined separately for discrete and continuous measures.

	For a discrete measure with $P(x)≔ P(❴x❵)$ and $Ꝓ(y)≔ Ꝓ(❴y❵)$, the Kullback–Leibler divergence \eqref{eq:7-1} is
	\begin{align}
		D(Ꝓ∥ P)&= H(Ꝓ,P)- H(Ꝓ)	\label{eq:631}	⏎
			&= ∑_{x∈𝓧}Ꝓ(x)\log{Ꝓ(x) ∕ P(x)},
	\end{align}
	where 
		\[	H(Ꝓ,P)≔ -∑_{x∈𝓧} Ꝓ(x)‧㏒ P(x) \]
	is the \emph{cross-entropy} of the measures␣$Ꝓ$ and␣$P$, and \begin{equation}\label{eq:676}
		H(Ꝓ)≔ H(Ꝓ,Ꝓ) = -∑_{x∈𝓧} Ꝓ(x)㏒Ꝓ(x)
	\end{equation} the \emph{entropy} of␣$Ꝓ$. 

	For a measure $π$ with marginals $P$ and $Ꝓ$, the cross-entropy is 
	\begin{align}
		H(π, P⨯Ꝓ)&= -∑_{x,y}π(x,y)㏒❨P(x)‧Ꝓ(y)❩⏎
			& =		-∑_{x,y}π(x,y)㏒ P(x)-∑_{x,y}π(x,y)㏒ Ꝓ(y) ⏎
			& =		-∑ₓP(x)㏒ P(x)-∑_y Ꝓ(y)㏒ Ꝓ(y), \label{eq:283}
	\end{align}
	where we have used the marginals \eqref{eq:4}.
	Note, that \eqref{eq:283} does not depend on $π$, and hence $H(π, P⨯Ꝓ)$ does not depend on $π$.

	With␣\eqref{eq:631}, the quantization problem \eqref{eq:Wasserstein-3} thus rewrites equivalently as
	\begin{equation} \label{eq:Wasserstein-2}
		\min_{π∶ π₂∈ P}∬_{𝓧⨯𝓧}dʳ ⅾπ- λ‧H(π)
	\end{equation}
	by involving the entropy only. 
	For this reason, we shall call the master problem~\eqref{eq:Wasserstein-3} also the \emph{entropy regularized problem}.

\section{Soft tessellation}\label{sec:Tessellation}
The quantization problem \eqref{eq:303} consists in finding a good (in the best case the optimal) approximation of a general probability measure $P$ on $𝓧$ by a simple, and discrete measure $Ꝓₘ= ∑_{j=1}ᵐ 𝑝ⱼ δ_{yⱼ}$.
The problem thus consists in finding good weights $𝑝₁,…,𝑝ₘ$, as well as good locations $y₁,…,yₘ$.
Quantization employs the Wasserstein distance to measure the quality of the approximation;
soft quantization involves the regularized Wasserstein distance, instead (as in \eqref{eq:Wasserstein-3}):
\[	\inf_{Ꝓₘ∈ ℘ ₘ(𝓧)} \inf_{π∶\substack{π₁=P,⏎ π₂=Ꝓₘ}} 𝔼_π dʳ+λ‧D(π∥ P⨯ Ꝓₘ),\]
where the measures on $𝓧$ supported by not more than $m$ points are (cf. \eqref{eq:468})
\[	𝒫ₘ(𝓧)= ｛Ꝓₘ∈ ℘ (𝓧)∶ Ꝓₘ= ∑_{j=1}ᵐ 𝑝ⱼ δ_{yⱼ}｝. \]

We separate the problem of finding the best weights and locations.
The following Section \ref{sec:Weights} addresses the problem of finding the optimal weights $𝑝$, the subsequent Section \ref{sec:Location} then the problem of finding the optimal locations $y₁,…,yₘ$.
As well, we shall elaborate the numerical advantages of \emph{soft} quantization below.

\subsection{Optimal weights}\label{sec:Weights}
Proposition \ref{prop:Best-1} above is formulated for general probability measures $P$ and $Ꝓ$. The desired measure in quantization is a simple and discrete measure.
To this end recall that measures, which are feasible for \eqref{eq:Wasserstein-3}, have marginals $π₂$ with $π₂≪ Ꝓ$ by Remark \ref{rem:328}. It follows that the support of the marginal is smaller than the support of $Ꝓ$, that is
\[	\supp π₂ ⊂ \supp Ꝓ. \]
For a simple measure $Ꝓ= ∑_{j=1}ᵐ 𝑝ⱼ δ_{yⱼ}$ with $𝑝ⱼ>0$, it follows in particular that $\supp π₂⊂ ❴y₁,…,yₘ❵$. We consider the measure $Ꝓ$ and the support $❴y₁,…,yₘ❵$ fixed in this subsection.

To unfold the result of Proposition \ref{prop:Best-1} for discrete measures we recall the smooth minimum and the softmin function for the discrete (empirical or uniform) measure $Ꝓ= ∑_{j=1}ᵐ𝑝ⱼ δ_{yⱼ}$.
For this measure, the smooth minimum \eqref{eq:177} explicitly is
\[	\smin{λ;Ꝓ}(y₁,…,yₘ)
		= -λ\log⟮𝑝₁ e^{-y₁/λ}+…+𝑝ₘ e^{-yₘ/λ}⟯. \]
For $λ= 1$ and uniform weights $𝑝₁=… = 𝑝ₘ= ⅟m$, this quantity is occasionally referred to as
\emph{\href{https://en.wikipedia.org/wiki/LogSumExp}{LogSumExp}}.
The softmin function (or Gibbs density \eqref{eq:226}) is
\[	σ_λ(y₁,…,yₘ) = ❪{e^{-yⱼ/λ} ∕ 𝑝₁ e^{-y₁/λ}+… +𝑝ₘ e^{-yₘ/λ)}}❫_{j=1}ᵐ. \]

It follows from Lemma \ref{lem:356} that the best approximating measure is
$Q= ∑_{j=1}ᵐqⱼ 𝑝ⱼ δ_{yⱼ}$, where the vector $q$ of optimal weights, relative to $Ꝓ$, is given explicitly by
\begin{equation}\label{eq:442}
	q= ∫_𝓧 σ_λ❨d(ξ,y₁)ʳ,…, d(ξ,yₘ)ʳ❩ P(ⅾξ)= 𝔼_P σ_λ❨d(ξ,y₁)ʳ,…, d(ξ,yₘ)ʳ❩,
\end{equation}
which involves computing expectations.

\subsubsection*{Soft tessellation}
	For $λ=0$, the softmin function $σ_λ$ is
	\[	𝑝ⱼ‧σ_{λ=0}❨d(ξ,y₁)ʳ,…,d(ξ,yₘ)ʳ❩ⱼ=
		\begin{cases}
			1 & \text{if } d(ξ,yⱼ)ʳ= \min ❨d(ξ,y₁)ʳ,…,d(ξ,yₘ)ʳ❩, ⏎
			0 & \text{else.}
	\end{cases}\]
	That is, the mapping $j↦ 𝑝ⱼ‧σ_λ(…)ⱼ$ can serve for classification, i.e., tessellation: 
	the point $ξ$ is associated to $yⱼ$, if $σ_λ(…)ⱼ≠ 0$ and the corresponding region is known as Voronoi diagram.

	For $λ>0$, the softmin $𝑝ⱼ‧σ_λ(…)ⱼ$ is not a strict indicator, but can be interpreted as probability instead. That is,
	\[	𝑝ⱼ‧σ_λ⟮d(ξ,y₁)ʳ,…, d(ξ,yₘ)ʳ⟯ⱼ \]
	is the probability of allocating $ξ∈ 𝓧$ to the quantizer $yⱼ$.

\subsection{Optimal locations\label{sec:Location}}
As a result of Proposition \ref{prop:Best-1}, the objective in \eqref{eq:253} is an expectation.
To identify the optimal support points $y₁,…,yₘ$, it is central to minimize
\begin{equation}\label{eq:512}
	\min_{Ꝓ=∑_{j=1}ᵐ 𝑝ⱼδ_{yⱼ}} 𝔼_{ξ∼P}⟮\smin{λ;y∼Ꝓ}d(ξ,y)ʳ⟯.
\end{equation}
This is a stochastic, non-linear and non-convex optimization problem.
\begin{equation}\label{eq:Wasserstein}
	f(y₁,…,yₘ)≔ 𝔼 f(y₁,…,yₘ;ξ)= 𝔼⟮\smin{λ;Ꝓ}_{j=1,…,m}d(ξ,yᵢ)ʳ⟯,
\end{equation}
where the function $f(y₁,…,yₘ;ξ)≔ \smin{λ;Ꝓ} ❴d(ξ,yᵢ)ʳ∶ j=1,…,m❵$ is non-linear and non-convex;
the optimal quantization problem constitutes an unconstrained, stochastic, non-convex and non-linear optimization problem.
The gradient of the objective is built of the components
\begin{equation}\label{eq:767}
	{∂∕∂yⱼ}f(y₁,…,yₘ)
		= {𝑝ⱼ‧\exp❨-d(ξ,yⱼ)ʳ/ λ❩ ∕ ∑_{j′= 1}ᵐ𝑝_{j′}‧\exp❨-d(ξ,y_{j′})ʳ/ λ❩}
			‧ \left. ∇_y d(ξ,y)ʳ\right|_{y=yⱼ},
\end{equation}
that is,
\begin{equation}\label{eq:785}
	∇f = 𝑝• σ_λ❨d(ξ,y₁)ʳ,…,d(ξ,yₘ)ʳ❩
			• r d(ξ,y)^{r-1}
			• ∇_y d(ξ,y),
\end{equation}
where ‘$•$’ denotes the Hadamard (element-wise) product and $𝑝$, $d(ξ,y)^{r-1}$ are the vectors with entries $𝑝ⱼ$, $d(ξ,yⱼ)^{r-1}$, $j=1,…,m$.

Algorithm \ref{alg:quantizers} is a stochastic gradient algorithm to minimize \eqref{eq:442}, which collects the elements of the optimal weights and the optimal locations given in the preceding and this section.

\begin{algorithm}[t]
	\KwResult{Optimal quantizing measure $Ꝓ= ∑_{j=1}ᵐ𝑝ⱼ δ_{yⱼ}$ with optimal weights $𝑝$ and locations $y$}
	\KwData{A sequence $(αₖ)_{k=1}^∞$ (the learning rate) with $∑_{k=1}^∞ αₖ= ∞$ and $∑_{k=1}^∞ αₖ²< ∞$, for example $αₖ= {σ ∕ (30+k)^{\nicefrac23}}$, where $σ²$ is the variance of the measure $P$;␤
	$𝑝$ probability weights guess with $𝑝ⱼ>0$ ($j=1,…,m$) and $𝑝₁+…+𝑝ₘ=1$, for example $𝑝← ❨⅟m,…⅟m❩$;}

	set $k← 0$ \hfill initialize the iteration count

	{\Repeat{desired approximation quality achieved}
		{	$y_{k+1}← yₖ- αₖ‧∇f(yₖ)$ \hfill update the location by stochastic approximation (cf. \eqref{eq:785}) ⏎
			$𝑝_{k+1}← {k ∕ k+1}𝑝ₖ + ⅟{k+1}𝑝ₖ•σ_λ(yₖ)$ \hfill update the probabilities ⏎
		$k← k+1$
	}}
	\KwRet{$y=(y₁,…yₘ)$, $𝑝= (𝑝₁,…,𝑝ₘ)$}
	\caption{Stochastic gradient algorithm to find the optimal quantizers and the optimal masses\label{alg:quantizers}}
\end{algorithm}
\begin{example}
	To provide an example for the gradient of the distance function in \eqref{eq:767} (\eqref{eq:785}, resp.), the derivative of the weighted norm 
	\[	d(ξ,y)=∥y-ξ∥ₚ≔ ❪∑_{ℓ=1}ᵈ w_ℓ‧|y_ℓ-ξ_ℓ|ᵖ❫^{\nicefrac1p}\]
	is 
	\[	\frac{∂}{∂yⱼ}∥y-ξ∥ₚʳ
		= r wⱼ ∥ξ-y∥ₚ^{\frac{r-p}p}‧|yⱼ-ξⱼ|^{p-1}‧\sign(yⱼ-ξⱼ).\]
\end{example}


\subsection{Quantization with large regularization parameters}
The entropy in \eqref{eq:676} is minimal for the Dirac measure $P=δ_x$ (where $x$ is any point in $𝓧$): in this case, $H(δ_x)= 1‧㏒1= 0$, while $H(Ꝓ)>0$ for any other measure.
For larger values of $λ$, the objective in \eqref{eq:Wasserstein-2} – and thus the objective of the master problem \eqref{eq:468} – supposedly will give preference to measure with fewer points.
This is indeed the case, as Theorem \ref{thm:198} (above) states.
We give its proof below, after formally defining the center of the measure.
\begin{definition}[Center of the measure]\label{def:center}
	Let $P$ be a probability measure on $𝓧$ and $d$ be a distance on $𝓧$.
	The point $a∈ 𝓧$ is a \emph{center of the measure~$P$ with respect to the distance~$d$}, if 
	\[	a∈ \argmin_{x∈ 𝓧} 𝔼 d(x,ξ)ʳ,\]
	provided that $𝔼 d(x₀,ξ)ʳ< ∞$ for some (and thus any) $x₀∈ 𝓧$ and $r≥1$.
\end{definition}
In what follows, we demonstrate that the regularized quantization problem~\eqref{eq:Wasserstein-2} links the optimal quantization problem and the center of the measure.
\begin{proof}[Proof of Theorem \ref{thm:198}]
	The problems \eqref{eq:Wasserstein-2} and \eqref{eq:Wasserstein-3} are equivalent by Proposition \ref{prop:Best-1}.
	Now assume that $yᵢ=yⱼ$ for all $i$, $j≤ m$, then $d(yᵢ,ξ)= d(yⱼ,ξ)$ for $ξ∈ Ξ$, and it follows that
	\[	\smin{λ}❨d(y₁,ξ)ʳ,…,d(yₘ,ξ)ʳ❩= d(yᵢ,ξ)ʳ,  i=1,…,m.\]
	The minimum of the optimization problem thus is attained at $yᵢ=a$, for each $i=1,…,m$, where~$a$ is the center of the measure $P$ with respect to the distance~$d$.
	It follows that $y₁=… =yₘ =a$ is a local minimum and a stationary point, satisfying the first order conditions
	\[	∇f(y₁,…,yₘ)= 0\]
	for the function $f$ given in␣\eqref{eq:Wasserstein}.
	Note as well that
	\[	σ_λ❨d(ξ,y₁)ʳ,…,d(ξ,yₙ)ʳ❩ᵢ
			= \frac{\exp⟮-d(ξ,yᵢ)ʳ/λ⟯}{∑_{j=1}ⁿ𝑝ⱼ\exp⟮-d(ξ,yⱼ)ʳ/λ⟯}
			= 1, \]
	the softmin function does not depend on $λ$ at the stationary point $y₁=… = yₘ= a$.
	
	Recall from␣\eqref{eq:767} that
	\[	∇𝔼⟮\smin{λ;Ꝓ}_{j=1,…,n}d(ξ,yⱼ)ʳ⟯
			= 𝔼 σ_λ❨d(y₁,ξ)ʳ,…,d(yₙ,ξ)ʳ❩ \boldsymbol{‧}∇ d(ξ,yᵢ)ʳ.\]
	By the product rule, the Hessian matrix is
	\begin{equation}
		∇²𝔼❪\smin{λ;Ꝓ}_{j=1,…,n}d(ξ,yⱼ)ʳ❫=
			𝔼\begin{pmatrix}\begin{array}{l}
				∇σ_λ❨d(y₁,ξ)ʳ,…,d(yₙ,ξ)ʳ❩•❨∇d(ξ,yᵢ)ʳ❩²⏎
				 + σ_λ❨d(y₁,ξ)ʳ,…,d(yₙ,ξ)ʳ❩•∇²d(ξ,yᵢ)ʳ
		\end{array}\end{pmatrix}.\label{eq:3}
	\end{equation}
	Note that the second expression is positive definite, as the Hessian $∇²d(ξ,yᵢ)ʳ$ of the convex function is positive definite and $∇\smin{λ;Ꝓ}_{j=1,…,n}(x₁,…,xₙ)= σ_λ(x₁,…,xₙ)≥ 0$.
	Further, the Hessian of the smooth minimum (see also the appendix) is 
	\[	∇σ_λ= ∇²\smin{λ}_{j=1,…,n}= -⅟{λ} Σ, \]
	where the matrix $Σ$ is
	\[	Σ≔ \diag\bigl(σ₁,…,σₙ\bigr)- σσਾ. \]
	This matrix␣$Σ$ is positive definite (as $∑_{i=1}ⁿσᵢ= 1$) and $0≤ Σ≤ 1$ in Loewner order (indeed, $Σ$ is the covariance matrix of the multinomial distribution).
	It follows that the first term in~\eqref{eq:3} is $𝓞(1)$, while the second is $𝓞❨⅟{λ}❩$, so that␣\eqref{eq:3} is positive definite for $λ$ sufficiently small.
	That is, the extremal point $yᵢ= a$ is a minimum for all $λ$.
	In particular, there exists $λ₀> 0$ such that \eqref{eq:3} is positive definite for every $λ> λ₀$ and hence the result.
\end{proof}
 
\section{Numerical illustration}\label{sec:Numerical}
This section presents numerical findings for the approaches and methods discussed earlier.
The Julia implementations for these methods are available online.\footnote{Cf.\ \href{https://github.com/rajmadan96/SoftQuantization.git}{https://github.com/rajmadan96/SoftQuantization.git}}

\medskip
In the following experiments, we approximate the measure $P$ by a finite discrete measure $\tilde{P}$ using the stochastic gradient algorithm, Algorithm~\ref{alg:quantizers}.
\subsection*{One dimension}
First, we perform the analysis in one dimension. In this experiment, our problem of interest is to find entropy regularized optimal quantizers for
\[P \sim \mathcal{N}(0,1) \quad \text{and}\quad P \sim \operatorname{Exp}(1)\] (the normal and the exponential distribution with standard parameters).
To enhance the peculiarity, we consider only $m=8$ quantizers.

Figure␣\ref{fig:1ab} illustrates the results of soft quantization of standard normal distribution and exponential distribution. 
It is apparent that when $λ$ is increased beyond a certain threshold (cf. Theorem \ref{thm:198}), the quantizers converge towards the center of the measure (i.e., the mean), while for smaller values of $λ$, the quantizers are able to identify the actual optimal locations with greater accuracy. 
Furthermore, we want to emphasize that our proposed method is capable of identifying mean location regardless of the shape of the distribution, which this experiment empirically substantiates.
\begin{figure}[ht!]
	\centering
	\subfloat[][Normal distribution: for $λ=10$, the best approximation residues at the center of the measure; for $\lambda=1$, the approximation reduces to 6 points only (two remaining points have probability $0$); for $λ=0$, we obtain the standard quantization]
	{	\includegraphics[width=0.48\textwidth]{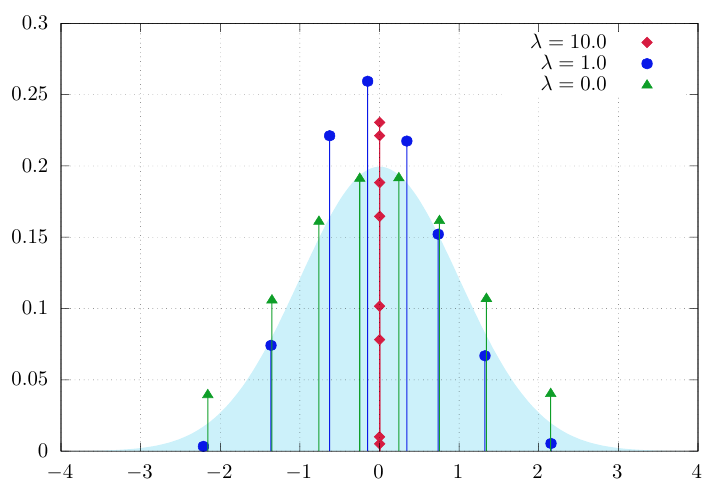}\label{fig:2a}}\hfill
	\subfloat[][Exponential distribution: the measures concentrate on 1 ($λ$␣large), 3 ($λ=1$), 5 ($λ=0.5$) and 8 quantization points]
	{	\includegraphics[width=0.48\textwidth]{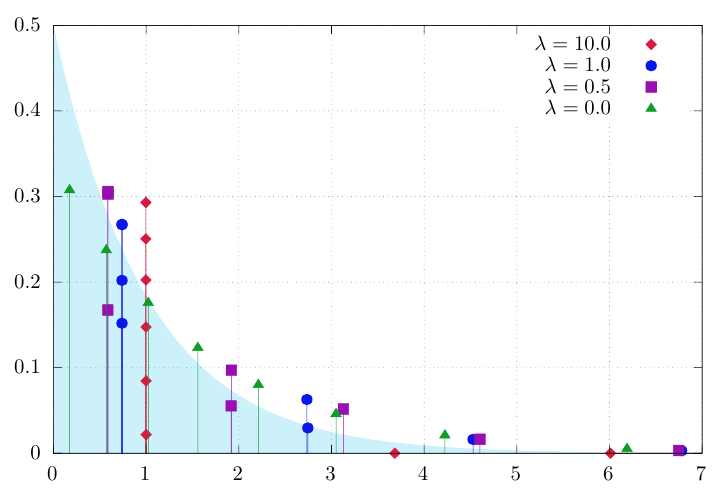}\label{fig:2b}}
	\caption{Soft quantization of measures on $ℝ$ with varying regularization parameter $λ$ with 8 quantization points\label{fig:1ab}}
\end{figure}
\begin{figure}[ht!]
	\centering
	\subfloat[][$λ=0$: approximate solution to standard quantization problem with 8 quantizers]
	{	\includegraphics[width=0.30\textwidth,height=0.23\textwidth]{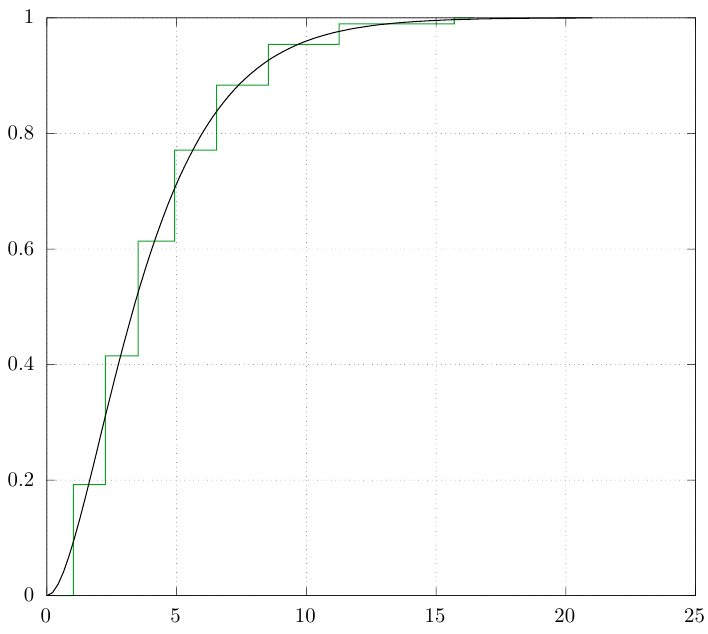}\label{fig:6a}}

	\subfloat[][$λ=1$: the 8 quantization points collapse to 7 quantization points]
	{	\includegraphics[width=0.30\textwidth,height=0.23\textwidth]{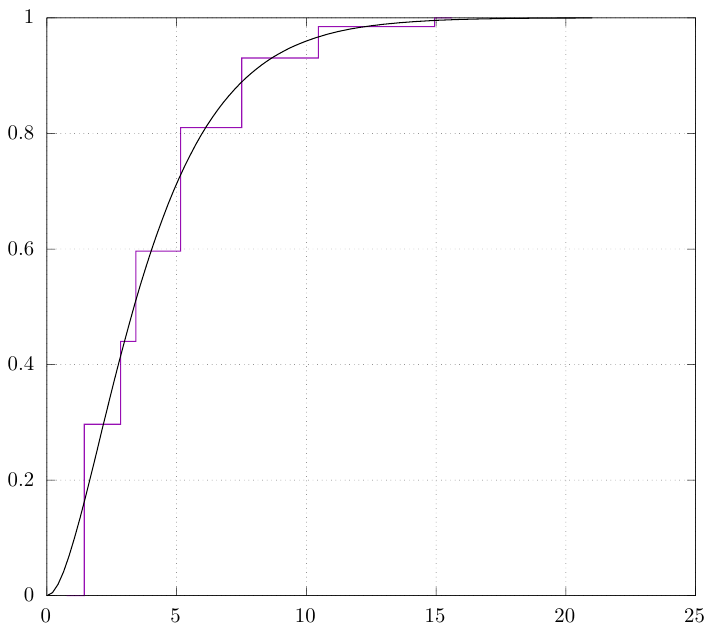}\label{fig:6b}}
	\hfill
	\subfloat[][$λ=10$: 8 quantization points collapse to 3 quantization points]
	{	\includegraphics[width=0.30\textwidth,height=0.23\textwidth]{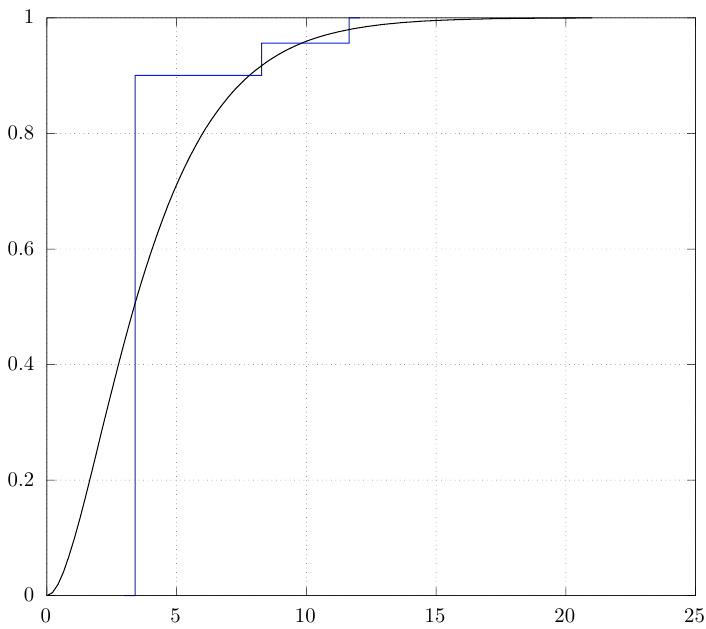}\label{fig:6c}}
	\hfill
	\subfloat[][$λ=20$: the quantization points converge to one single point, the center of the measure]
	{	\includegraphics[width=0.30\textwidth,height=0.23\textwidth]{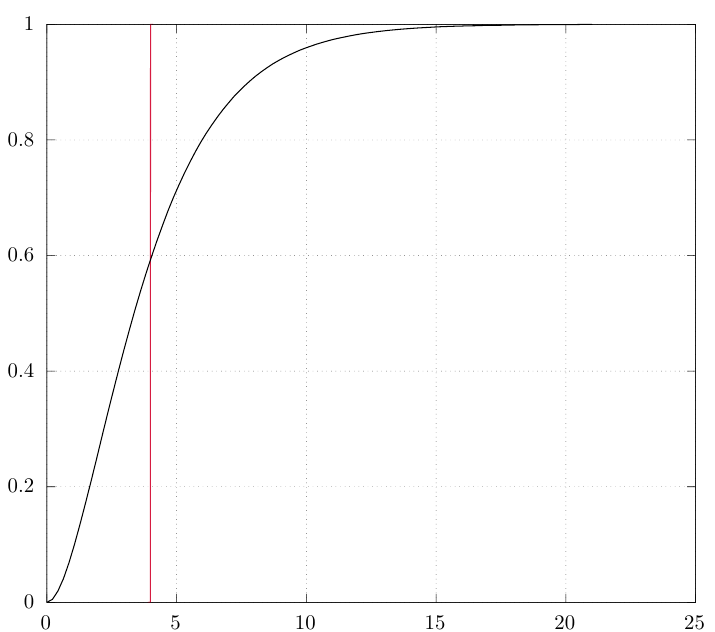}\label{fig:6d}}
	\caption{Soft quantization of the Gamma distribution on $ℝ$ with varying regularization parameter $λ$; the approximating measure simplifies with $λ$ increasing}\label{fig:1DUniform_8}
\end{figure}

In order to increase the understanding of dissemination of weights (probabilities) and their respective positions, the following examination involves the calculation of the cumulative distribution function.
Additionally, we consider  \[P \sim \Gamma(2, 2) \text{(Gamma distribution)}\] as a problem interest, a notably distinct scenario in terms of shape compared to the measures previously examined.

Figure~\ref{fig:1DUniform_8} provides results.
It is evident that as $λ$ increases, the number of quantizers $m$ decreases.
When $λ$ reaches a specific threshold, such as with $λ=20$ in our case, all quantizers converge towards the center of the measures, represented by the mean (i.e., 4).
\subsection*{Two  dimensions}
Next, we demonstrate the behavior of entropy regularized optimal quantization for a range of $λ$ in two dimensions. In the following experiment, we consider  
\[P \sim U❨(0,1)\times(0,1)❩ \text{(uniform distribution on the square)}\]
as a problem of interset. Initially, we perform the experiment with $m=4$ quantizers.
\begin{figure}[ht!]
	\subfloat[][Uniform distribution in $ℝ²$]
	{	\includegraphics[width=0.49\textwidth]{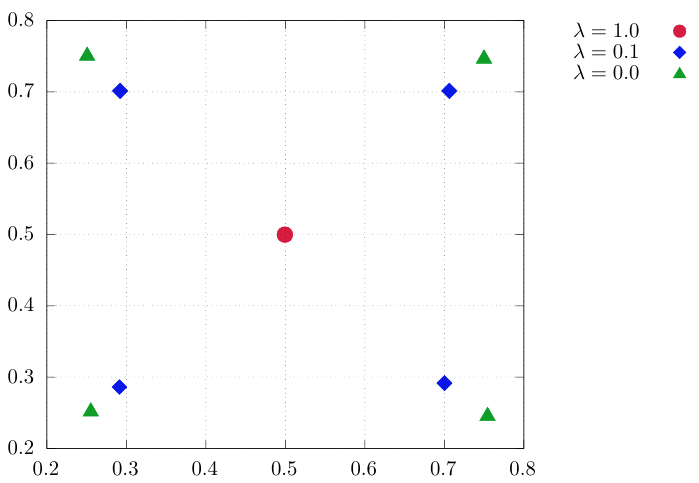}\label{fig:3a}}
	\subfloat[][Enlargement of Figure~\ref{fig:3a}: for larger values of $λ$ (here, $λ=1$), the quantizers align while converging to the center of the measure]
	{	\includegraphics[width=0.5\textwidth]{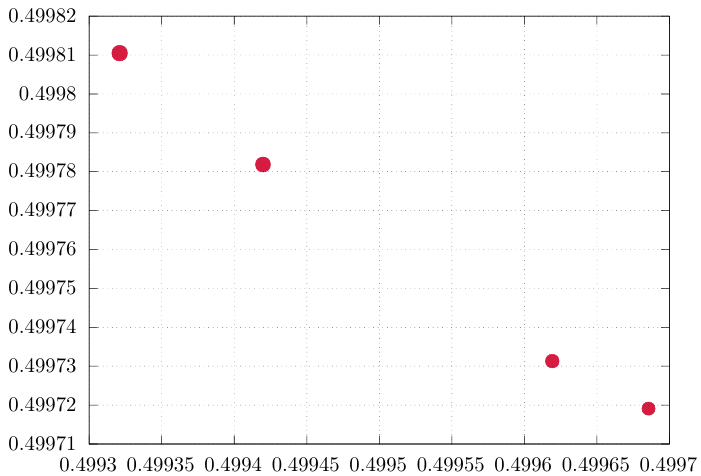}\label{fig:3b}}
	\caption{Two dimension--Soft quantization of uniform distribution on $ℝ²$ with varying regularization parameter $λ$ with $4$ quantizers}\label{fig:2DUniform}

\end{figure}
Figure~\ref{fig:2DUniform} illustrates the findings. Subplot~\ref{fig:3a} reveals a quantization pattern similar to what was observed in the one-dimensional experiment. However, in Subplot~\ref{fig:3b}, we gain a detailed insight into the behavior of quantizers at $λ=1$, where they align diagonally before eventually colliding. Furthermore, the size of the point indicates the respective probability of the quantization point, which is notably uniformly distributed for varying regularization parameter $λ$. 
\begin{figure}[ht!]
	\centering
	\subfloat[][$λ=0.0$: approximate solution to standard quantization problem with 16␣quantizers]
	{	\includegraphics[width=0.30\textwidth,height=0.23\textwidth]{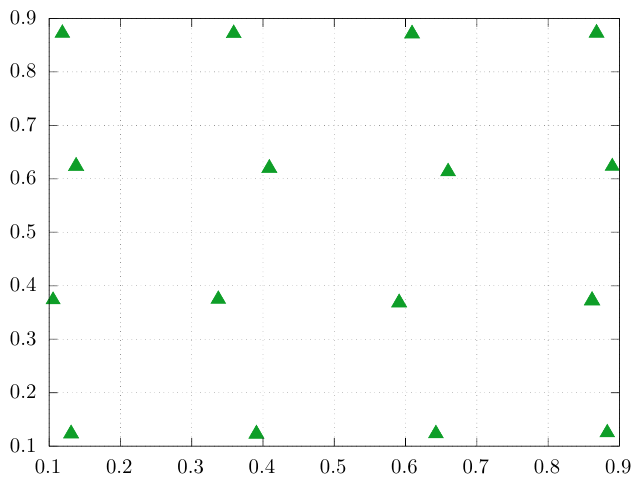}\label{fig:4a}}

	\subfloat[][$λ=0.037$: the 16 quantization points collapse to 8 quantization points]
	{	\includegraphics[width=0.30\textwidth,height=0.23\textwidth]{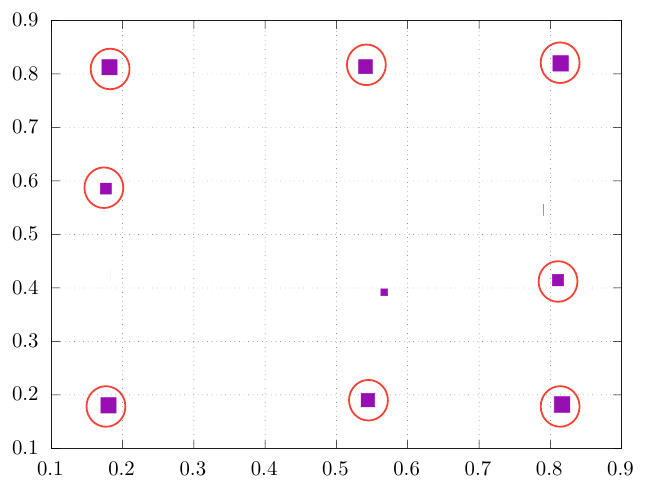}\label{fig:4b}}
	\hfill
	\subfloat[][$λ=0.1$: the 16 quantization points collapse to 4 quantization points]
	{	\includegraphics[width=0.30\textwidth,height=0.23\textwidth]{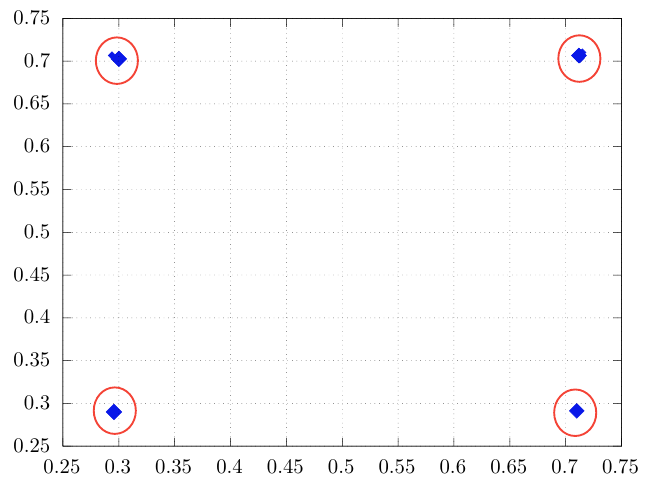}\label{fig:4c}}
	\hfill
	\subfloat[][$λ=1.0$: the quantization points converge to one single point, the center of the measure, in an aligned way]
	{	\includegraphics[width=0.30\textwidth,height=0.23\textwidth]{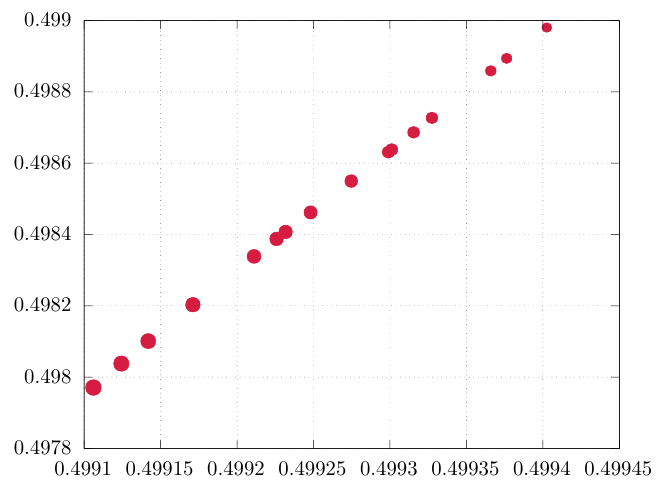}\label{fig:4d}}
	\caption{Soft quantization of uniform distribution on $ℝ²$ with varying regularization parameter $λ$; the approximating measure simplifies with $λ$ increasing}\label{fig:2DUniform_16}
\end{figure}

Once again, we are considering uniform distribution as a problem of interest in the subsequent experiment, this time employing $m=16$ quantizers for enhanced comprehension. 
Figure~\ref{fig:2DUniform_16} encapsulates the essence of the experiment, offering an extensive visual representation.  
In contrast to the previous experiment, we observe that for regularization values of $λ=0.037$  and $λ=0.1$, they assemble at the nearest strong points (in terms of high probability) rather than converging toward the center of the measure (see Subplots~\ref{fig:4b} and \ref{fig:4c}).
Subsequently, for larger $λ$, they move from these strong points toward the center, where they make a diagonal alignment before collision (see Subplot~\ref{fig:4d}).
More concisely, when $λ=0$, we achieve the genuine quantization solution (see Subplot~\ref{fig:4a}). As $λ$ increases, quantizers with lower probabilities converge towards those with nearest higher probabilities. Subsequently, all quantizers converge towards the center of the measure, represented by the mean of respective measure.
\begin{figure}[ht!]
	\centering
	\subfloat[][$λ=0.0$ (solution to standard quantization problem)]
	{	\includegraphics[width=0.30\textwidth,height=0.25\textwidth]{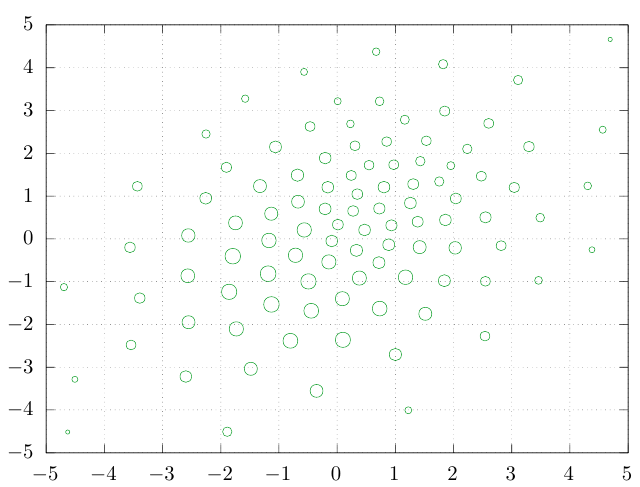}\label{fig:5a}}
	\hfill
	\subfloat[][$λ=5.0$]
	{	\includegraphics[width=0.30\textwidth,height=0.25\textwidth]{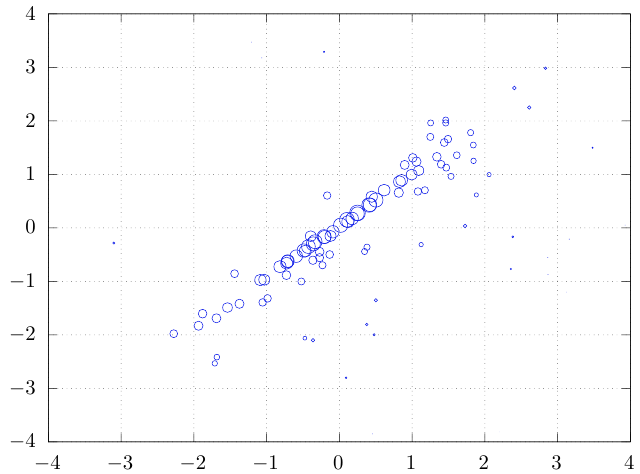}\label{fig:5b}}
	\hfill
	\subfloat[][$λ=10.0$]
	{	\includegraphics[width=0.30\textwidth,height=0.25\textwidth]{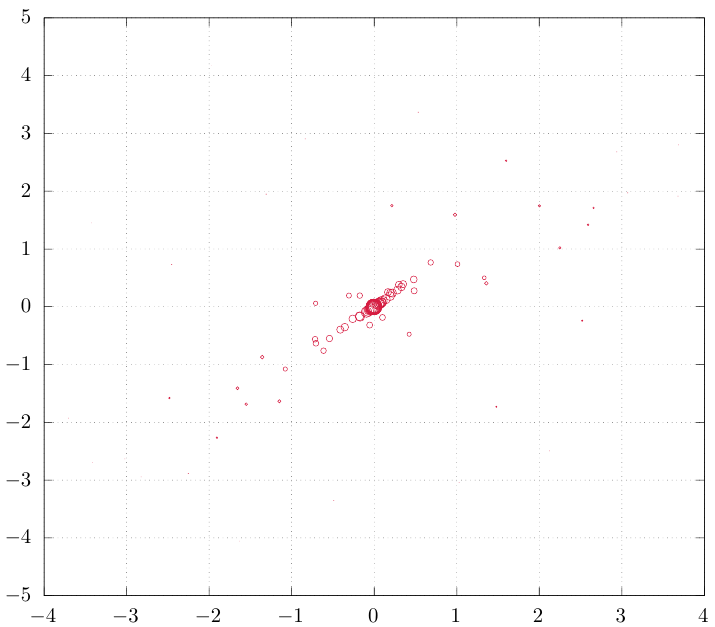}\label{fig:5c}}
	\caption{Two dimension--Soft quantization of normal distribution on $ℝ²$ with varying regularization parameter $λ$; parameters: $r=2$, $p=2$ and $m=100$}\label{fig:2DNormal_100}
\end{figure}

Thus far, we have conducted two-dimensional experiments, employing various quantizers ($m=4$ and $m=16$) with the uniform distribution. Now, we will delve into the complexity of a multivariate normal distribution, aiming to enhance comprehension.
More precisely, our problem of interest is to find soft quantization for 
\[P \sim \mathcal N(\mathbf{\mu}, \Sigma),\]
where \[\mu = \begin{pmatrix} 0 \\ 0 \end{pmatrix}, \quad
\Sigma = \begin{pmatrix} 3 & 1 \\
						 1  & 3 \end{pmatrix}.\]
In this endeavor, we employ more quantizers, specifically $m=100$.
Figure~\ref{fig:2DNormal_100} captures the core essence of the experiment, delivering a comprehensive and visually illustrative representation.
From the experiment it becomes evident that, as $λ$ increases, the initial diagonal alignment precedes convergence toward the center of the measure.
Additionally, we observe a noticeable shift of points with lower probabilities towards those with higher probabilities.
Furthermore, this experiment highlights that the threshold of $λ$ for achieving convergence or diagonal alignment in the center of the measure is dependent on the number of quantizers employed.

\section{Summary}\label{sec:Summary}
This study enhances the stability and simplicity of the standard quantization problem by introducing a novel method of quantization using entropy. 
Propositions~\ref{prop:Best-1} and \ref{prop:444} thoroughly elucidate the intricacies of the master problem~\eqref{eq:303}.
Our substantiation of convergence of quantizers to the center of measure explains the transition from a complex hard optimization problem to a simplified configuration (see Theorem~\ref{thm:198}). 
More concisely, this transition underscores the fundamental shift towards a more tractable and straightforward computational framework, marking a significant advancement in our approach.
Moreover, in Section~\ref{sec:Numerical}, we provided numerical illustrations of our method, thereby confirming the robustness, stability, and properties as discussed in our theoretical results. These numerical demonstrations serve as empirical evidence reinforcing the efficacy of our proposed approach.

\bibliographystyle{abbrvnat}
\bibliography{LiteraturAlois,LiteraturRaj}

\appendix

\section*{Appendix\label{sec:Appendix}}
\section{Hessian of the softmin}

%
	
	The empirical measure $\frac1{λ}\sum_{i=1}^{n}\delta_{x_{i}}$ is a probability measure.
	Form Jensen’s inequality, it follows that $\smin{\lambda}(x_{1},\dots,x_{n})\le\frac{1}{n}\sum_{i=1}^{n}x_{i}\eqqcolon\overline{x}_{n}$.
	The smooth minimum thus involves a cumulant generating function for which we derive that 
	\begin{align}
		\smin{λ}(x₁,…,xₙ) & =∑_{j=1}^∞ {(-1)^{j-1} ∕ λ^{j-1}‧ j!}κⱼ ⏎
	 & =\overline{x}ₙ- {1∕ 2λ}sₙ²+⅟{6λ²}κ₃+𝓞(λ^{-3}),\label{eq:7}
	\end{align}
	where $κⱼ$ is the $j$‑th cumulant with respect
	to the empirical measure. Specifically,
	\[	κ₁= \overline{x}ₙ= ⅟n∑_{i=1}ⁿxᵢ, 
		κ₂= sₙ²= ⅟n∑_{i=1}ⁿ(xᵢ-\overline{x}ₙ)², 
		κ₃= ⅟n∑_{i=1}ⁿ(xᵢ- \overline{x}ₙ)³, \]
	where $\overline{x}ₙ$ is the ‘sample mean’ and $sₙ²$ the ‘sample variance’, the following cumulants ($κ₄$, etc.)\ are more involved, though.
	The Taylor series expansion $\log(1+x)= x-⅟2 x²+𝓞(x^3)$ and
	\begin{align*}
		-λ\log⅟n∑_{i=1}ⁿe^{-xᵢ/λ}
			& =\overline{x}ₙ-λ\log⅟n∑_{i=1}ⁿe^{-(xᵢ-\overline{x}ₙ)/λ}⏎
			& =\overline{x}ₙ-λ\log∑_{i=1}ⁿ ⅟n❨1-\frac{xᵢ-\overline{x}ₙ}{λ}
				 +⅟2❨\frac{xᵢ-xₙ}{λ}❩²
				 -⅟6❨\frac{xᵢ-\overline{x}ₙ}{λ³}❩
				+𝓞❨⅟{λ⁴}❩❩⏎
			& =\overline{x}ₙ- λ\log❨1+ ⅟{2λ²}sₙ²- ⅟{6λ³}κ₃+ 𝓞(λ^{-4})❩⏎
			& =\overline{x}ₙ- λ❨⅟{2λ²}sₙ²- ⅟{6λ³}κ₃+ 𝓞(λ^{-4})❩⏎
			& =\overline{x}ₙ- ⅟{2λ}sₙ²+ ⅟{6λ²}κ₃+𝓞(λ³).
	\end{align*}
	Note as well that the softmin function is the gradient of the smooth
	minimum,
	\[	σ_λ(x₁,…,xₘ)ᵢ= \frac{∂}{∂xᵢ}\smin{λ}(x₁,…,xₙ). \]

The softmin function is frequently used in classification in a maximum likelihood framework. 
It holds that
\begin{align*}
	{∂²∕∂xᵢ∂xⱼ}\smin{λ}(x₁,…,xₙ)
	& ={∂∕∂xⱼ}\frac{\exp(-λ xᵢ)}{∑_{k=1}ᵐ\exp(-λ xₖ)}⏎
	& =+ λ\frac{\exp(-λ xᵢ-λ xⱼ)}{\left(∑_{k=1}ᵐ\exp(-λ xₖ)\right)²}⏎
	& = λ σᵢσⱼ
\end{align*}
for $i≠j$ and 
\begin{align*}
	{∂²∕∂xᵢ²}\smin{λ}(x₁,‥,xₙ)
	& ={∂∕∂xᵢ}\frac{\exp(-λ xᵢ)}{∑_{j=1}ᵐ\exp(-λ xⱼ)}⏎
	& =\frac{-λ\exp(-λ xᵢ)\left(∑_{j=1}ᵐ\exp(-λ xⱼ)\right)+λ\exp(-λ xᵢ-λ xᵢ)}{\left(∑_{j=1}ᵐ\exp(-λ xⱼ)\right)}\\
	& =-λ σᵢ+λσᵢσᵢ,
\end{align*}
that is,
\[	∇²\smin{λ}(x₁,…,xₙ)
	= λ⟮σσਾ-\diag σ⟯
	=-λ‧\left(\left(\begin{array}{ccc}
			σ₁ & 0 & ⋱ \\
			0  & ⋱ & 0\\
			⋱  & 0 & σₙ
		\end{array}\right)- σ‧σਾ \right). \]
\end{document}